\documentclass[a4paper, 10pt]{article}

\setlength{\oddsidemargin}{0cm} 
\setlength{\evensidemargin}{0cm}
\setlength{\textwidth}{150mm}
\setlength{\textheight}{230mm}

\usepackage{amsmath, amssymb, amsthm}
\usepackage[CJKbookmarks=true,colorlinks,linkcolor=black,anchorcolor=black,citecolor=black]{hyperref}
\usepackage{CJK}
\usepackage{multirow}
\usepackage{booktabs}
\usepackage{appendix}
\usepackage{tikz}
\usepackage{graphicx, xcolor}
\usepackage{overpic,subfig, float}
\usepackage{pgfplots}
\usepackage[stable]{footmisc}
\usepackage{listings, color}
\lstset{ 
numbers=none,frame=single,language=C++,
extendedchars=false
 }

\newenvironment{@abssec}[1]{%
       \vspace{.05in}\footnotesize
	   \noindent
         {\upshape\bfseries #1 }\ignorespaces
	 }

\newcommand\keywordsname{Keywords}
\newcommand\AMSname{AMS subject classifications}
\newcommand\Ackname{Acknowledgments}
\newcommand\datav{Data Availibility}
\newenvironment{keywords}{\begin{@abssec}{\keywordsname}}{\end{@abssec}}
\newenvironment{AMS}{\begin{@abssec}{\AMSname}}{\end{@abssec}}
\newenvironment{Ack}{\begin{@abssec}{\Ackname}}{\end{@abssec}}
\newenvironment{datava}{\begin{@abssec}{\datav}}{\end{@abssec}}

\newcommand\bbR{\mathbb{R}}

\newcommand\bbN{\mathbb{N}}

\def\+#1{\boldsymbol{#1}}
\newcommand\ang[1]{\left\langle {#1} \right\rangle}

\newcommand\pd[2]{\dfrac{\partial {#1}}{\partial {#2}}}

\newcommand\odd[1]{\dfrac{\mathrm{d}}{\mathrm{d} {#1}}}
\newcommand\od[2]{\dfrac{\mathrm{d} {#1}}{\mathrm{d} {#2}}}

\newcommand\ifup[1]{}

\newcommand\tl[1]{\boldsymbol{\tilde{#1}}}

\begin{document}
\begin{CJK*}{UTF8}{gkai}
\theoremstyle{plain}
\newtheorem{lemma}{Lemma}[section]
\newtheorem{theorem}{Theorem}[section]
\newtheorem{cor}{Cor}[section]
\newtheorem{prop}{Prop}[section]
\newtheorem{remark}{Remark}[section]

\theoremstyle{definition}
\newtheorem{defn}{Definition}[section]
\newtheorem{exmp}{Example}[section]
\numberwithin{equation}{section}

\bibliographystyle{plain}

\title{On Well-posed Boundary Conditions for the Linear
	Non-homogeneous Moment Equations in Half-space}

\author{Ruo Li\thanks{CAPT, LMAM \& School of Mathematical Sciences,
    Peking University, Beijing 100871, China, email: {\tt
      rli@math.pku.edu.cn}.} \and Yichen Yang\thanks{School of
    Mathematical Sciences, Peking University, Beijing 100871, China, email:
    {\tt yichenyang@pku.edu.cn}.}
}

\maketitle{}

\begin{abstract}	
	We propose a necessary and sufficient condition for the well-posedness
	of the linear non-homogeneous Grad moment equations in half-space.
	The Grad moment system is based on Hermite expansion and regarded
	as an efficient reduction model of the Boltzmann equation. At a 
	solid wall, the moment equations are commonly equipped with
	a Maxwell-type boundary condition named the Grad boundary
	condition. We point out that the Grad boundary condition is
	unstable for the non-homogeneous half-space problem. Thanks
	to the proposed criteria, we verify the well-posedness
	of a class of modified boundary conditions. The technique to
	make sure the existence and uniqueness mainly
	includes a well-designed preliminary simultaneous transformation
	of the coefficient matrices,
	and Kreiss' procedure about the linear boundary value problem with
	characteristic boundaries. The stability is established by a
	weighted estimate. At the same time, we obtain the analytical
	expressions of the solution, which may help solve the half-space
	problem efficiently.
\end{abstract}
\begin{AMS}
	34B40; 35Q35; 76P05; 82B40
\end{AMS}
\begin{keywords}
	Half-space problem, moment method, well-posed boundary condition,
	non-homogeneous equations
\end{keywords}

\section{Introduction}
Half-space problems are at the center of our understanding of the
kinetic boundary layer \cite{Sone2007}. The relevant study helps 
prescribe slip boundary conditions for the fluid-dynamic-type 
equations \cite{Sone2007} and results in the interface coupling 
condition between kinetic and hydrodynamic 
equations \cite{1997Arnold,Lu2017,Chen2019}.
For the Boltzmann equation, the theory of linear 
half-space problems has been well-developed 
(cf. \cite{Bardos2006} and references therein).

In this paper, we focus on the linear steady non-homogeneous 
equations in half-space:
\begin{gather}
\label{eq:und}
\+A\od{W(y)}{y} = -\+QW(y) + h(y),\quad y\in[0,+\infty), \\
\label{eq:und_bc}
\+BW(0) = g,
\end{gather}
where $e^{ay}W(y)\in L^2(\bbR_+;L^2(\bbR^N))$ and 
$e^{ay}h(y)\in L^2(\bbR_+;L^2(\bbR^N))$ for some constant $a>0$. 
Here $\+A,\+Q,\+B$ are given $N\times N$ matrices, and $g$ is 
an $N\times 1$ vector, all with constant coefficients.
For the considered Grad moment equations, the matrix $\+A$ is
symmetric, and $\+Q$ is symmetric positive semi-definite,
which individually has a special block structure, as the article
will show later.

We can realize the importance of the half-space 
problem \eqref{eq:und} from two aspects. First, the 
moment equations proposed
by Grad \cite{Grad1949} are regarded as an efficient reduction 
model \cite{framework} of the Boltzmann equation. As an extension
of the celebrated Navier-Stokes equations, the moment equations 
have gained much more attention in recent years 
\cite{Torrilhon2009,Fan_new,Hu2019,Cai2020}.
The half-space problem \eqref{eq:und} could arise
from the asymptotic analysis of the moment equations. This
problem plays a crucial role in understanding the boundary
layer of the moment equations \cite{2008Linear,Lijun2017}.
Second, the moment system \eqref{eq:und} may serve as an
efficient numerical solver of kinetic equations.
There are many approximatively analytical and numerical methods
to solve half-space kinetic layer equations.
Some most famous analytical methods are essentially low-order
moment methods, including the Maxwell method \cite{CC1989},
Loyalka's method \cite{Loyalka1967,Loyalka1968}, and 
the half-range moment method \cite{Gross1957}.
The arbitrary order moment equations are also widely used
to resolve the layer problems \cite{2008Linear,Gu2014,Lijun2017},
which can give formal analytical solutions.

Despite the broad application range, the well-posedness
of the moment system with the Grad boundary condition \cite{Grad1949}
is doubtable. For the linear initial boundary value problem,
\cite{Sarna2018} has pointed out that the Grad boundary condition
is unstable. Their method can not apply to the half-space problem
directly. We have studied the half-space problem for the linear 
homogeneous moment equations and proposed several solvability
conditions in an earlier paper \cite{Yang2022a}. The object
of this work is to study the well-posed criteria of the linear 
non-homogeneous Grad moment equations in half-space.

In general, the concept of well-posedness contains three points.
First, the solution exists. Second, the solution is unique. Third,
the solution is controlled by the boundary data and non-homogeneous term. 
In the homogeneous case, there is no need to consider the
variants of the non-homogeneous item. Due to this significant
difference, the non-homogeneous case is not a trivial 
corollary of the homogeneous situation.

Our method is essentially basic linear algebra. 
Because the matrices $\+A$ and $\+Q$ may both have zero
eigenvalues, the problem has the characteristic boundary,
where the characteristic variables have a vanishing speed
at the boundary \cite{Hil2013}. We first make a simultaneous
transformation of the coefficient matrices to reduce the 
original problem. Then following the characteristic analysis 
of ODEs, we explicitly write the general solution to the 
half-space problem. By analysing the stability of the 
solution, we finally achieve well-posed criteria for the 
linear non-homogeneous moment system in half-space. 
The method is similar as Kreiss' procedure \cite{Kreiss1970}
to study linear hyperbolic systems.
We find that the Grad boundary condition
does not obey the proposed well-posed rules
because of the instability of the non-homogeneous term. 
After minor modification, we can obtain well-posed boundary
conditions for the moment system \eqref{eq:und} in half-space.

A discrete system similar to \eqref{eq:und} comes from the
discrete velocity method (DVM) of the Boltzmann equation, 
where the matrix $\+A$ is diagonal. The solvability of 
these discrete equations has been exhaustively studied by
Bernhoff \cite{Bern2008,Bern2010,Bern2010b}. These results
can apply to the system \eqref{eq:und} to obtain
the existence and uniqueness of the solution. In comparison,
our method utilizes the specific structure of the Grad
moment equations, which gives a more subtle description of the 
solution. We also find the additional stability condition
besides the solvability condition. The general abstract 
theory about the linear boundary value problem can be found in 
\cite{1960Local,Kreiss1970,Osher1975,Rauch1985}.

The paper is arranged as follows: in Section 2, we briefly 
introduce the linearized Boltzmann equation and the Grad
moment equations. Then we state the well-posed conditions
without detailed proof. In Section 3, we illustrate the 
instability of the Grad boundary condition by simple examples
and discuss its well-posed modification. In Section 4, we 
complete the proof of well-posedness. The paper ends with 
a conclusion.

\section{Basic Equations and Main Results}
\subsection{Basic Equations}
Around the equilibrium states, the rarefied gas can be described by
the linearized Boltzmann equation (LBE). For single-species monatomic 
molecules, we consider the LBE with the Maxwell boundary condition
\begin{subequations}
\begin{gather}
	\pd{f}{t} + \xi_d\pd{f}{x_d} = \mathcal{L}[f],\quad
	f=f(t,\+x,\+\xi),\ \+x\in\Omega\subset\bbR^3,\ \+\xi\in\bbR^3,\\
	\label{eq:Mbc}
	  f(t,\+x,\+\xi) = \chi f^w(t,\+x,\+\xi)+(1-\chi)f(t,\+x,\+\xi^*),
   \quad \+x\in\partial\Omega,\ (\+\xi-\+u^w) \cdot \+n < 0,
\end{gather}
\end{subequations}
where $f$ is the velocity distribution function, $t$ denoting the
time, $\+x=(x_1,x_2,x_3)$ the spatial coordinates and $\+\xi=(\xi_1,
\xi_2,\xi_3)$ the microscopic velocity. Here $\mathcal{L}$ is a linear
operator depicting the collision between gas molecules. 

The Maxwell model assumes that the boundary is an impermeable wall 
with a unit normal vector $\+n$ exiting the region, a given velocity
$\+u^w$, and a given temperature $\theta^w$. To avoid cumbersome details 
about the rotation invariance, we assume 
\[\Omega=\{\+x\in\bbR^3:\ x_2\geq 0\},\]
with a fixed $\+n=(0,-1,0)$. The reflection
at the wall is divided into a sum of $\chi$ portion of the specular 
reflection and $1-\chi$ portion of the diffuse reflection, 
where $\chi\in[0,1]$ is the tangential
momentum accommodation coefficient. To consider the steady layer
equations, we further assume
\[\+u^w\cdot\+n=0,\]
then the velocity from specular reflection is
\[\+\xi^* = \+\xi - 2[(\+\xi-\+u^w)\cdot \+n] \+n
=(\xi_1,-\xi_2,\xi_3),\]
and 
\begin{equation}
f^w(t,\+x,\+\xi) = \mathcal{M}(\+\xi)\left(
			\rho^w + \+u^w\cdot\+\xi + 
			\theta^w\frac{|\+\xi|^2-3}{2}\right).
\end{equation}
The reference distribution $\mathcal{M}$ is
a Maxwellian at rest
\[
	\mathcal{M}(\+\xi) = \frac{1}{(2\pi)^{3/2}}
	\exp\left(-\frac{|\+\xi|^2}{2} \right),
\]
where $\rho^w$ is determined by the no mass flow condition at the wall
\[
	\int_{\bbR^3}\!\!(\+\xi-\+u^w)\cdot\+n f(t,\+x,\+\xi)\,\mathrm{d}
	\+\xi = 0,\quad \+x\in\partial\Omega.
\]

The Grad moment equations with linearized ansatz assume that the 
distribution function has a Hermite expansion, for a chosen
integer $M\geq 2$, as
\begin{equation}\label{eq:lant}
	f(t,\+x,\+\xi) = \mathcal{M}(\+\xi)\sum_{|\+\alpha|\leq M}
	w_{\+\alpha}(t,\+x)\phi_{\+\alpha}(\+\xi),
\end{equation}
where $\+\alpha=(\alpha_1,\alpha_2,\alpha_3)\in\bbN^3$ is the
multi-index, $|\+\alpha|=\alpha_1+\alpha_2+\alpha_3$. 
The orthonormal Hermite polynomial 
$\phi_{\+\alpha}=\phi_{\+\alpha}(\+\xi)$ is defined
\cite{Grad1949N} by ensuring
\[\ang{\mathcal{M}\phi_{\+\alpha}\phi_{\+\beta}}=\delta_{\+\alpha,\+\beta},
	\quad \phi_{\+0}=1,\ \phi_{\+e_i}=\xi_i,\quad
	\ang{\cdot}:=\int_{\bbR^3}\!\!\cdot\,\mathrm{d}\+\xi,\]
where $\+e_i\in\bbN^3$ only has the $i$-th component being one. 
So the ansatz \eqref{eq:lant} gives 
\[w_{\+\alpha}=w_{\+\alpha}(t,\+x)
=\ang{f\phi_{\+\alpha}}.\] 
Substituting the ansatz into the 
linearized Boltzmann equation and matching the coefficients 
before basis functions, we have the $M$-th order moment equations
\begin{equation}\label{eq:G1}
	\pd{w_{\+\alpha}}{t} + \ang{\mathcal{M}\xi_d\phi_{\+\alpha}
	\phi_{\+\beta}}\pd{w_{\+\beta}}{x_d} =
	\ang{\mathcal{L}[\mathcal{M}\phi_{\+\beta}]\phi_{\+\alpha}}
	w_{\+\beta},\quad |\+\alpha|\leq M,
\end{equation}
where Einstein's convention is used to omit the summation 
notation about $|\+\beta|\leq M.$

Intuitively, the moment variables $w_{\+\alpha}$ can be related
to the macroscopic variables such as the density, the macroscopic 
velocity, and the temperature. So \eqref{eq:G1} would contain the
linearized Euler equations (around the equilibrium state given by
$\mathcal{M}$). We can regard \eqref{eq:G1} as an extension of 
hydrodynamic equations \cite{Grad1949}.

It's not difficult to check \cite{Zheng2021} that \eqref{eq:G1} is
symmetric hyperbolic. More precisely, the coefficient matrices are all
symmetric. To ensure the correct number of boundary
conditions for the hyperbolic system \cite{Hil2013}, 
Grad \cite{Grad1949} suggested
testing the Maxwell boundary condition \eqref{eq:Mbc} with odd
polynomials (about the argument $(\+\xi-\+u^w)\cdot\+n=-\xi_2$)
no higher than the $M$-th degree.

The Grad boundary condition has some equivalent representations
\cite{Grad1949,2008Linear,Cai2011,Lijun2017}. To exhibit the 
continuity of fluxes at the boundary, we extract the factor
$\xi_2$ of these odd polynomials and write the Grad boundary 
condition in an equivalent form:
\begin{eqnarray*}
	\int_{\bbR^2}\!\!\int_0^{+\infty}\!\!\xi_2
	\phi_{\+\alpha}f(t,\+x,\+\xi)\,\mathrm{d}\+\xi &=& 
	\chi\int_{\bbR^2}\!\!\int_0^{+\infty}\!\!\xi_2
	\phi_{\+\alpha}f^w(t,\+x,\+\xi)\,\mathrm{d}\+\xi +\\&&
	(1-\chi)\int_{\bbR^2}\!\!\int_0^{+\infty}\!\!\xi_2
	\phi_{\+\alpha}f(t,\+x,\+\xi^*)\,\mathrm{d}\+\xi,
\end{eqnarray*}
where $\alpha_2$ is even and $|\+\alpha|\leq M-1.$
Plugging the ansatz \eqref{eq:lant} into the above formula,
we can utilize the even-odd parity of Hermite polynomials
\cite{Yang2022a} to get
\begin{equation}\label{eq:rbc1}
	\left(1-\frac{\chi}{2}\right)\sum_{\beta_2 \text{\ odd}}\ang
	{\xi_2\mathcal{M}\phi_{\+\alpha}\phi_{\+\beta}} w_{\+\beta}
	= -\frac{\chi}{2}\sum_{\beta_2\text{\ even}}
	\ang{|\xi_2|\mathcal{M}
	\phi_{\+\alpha}\phi_{\+\beta}}(w_{\+\beta}-b_{\+\beta}),
\end{equation}
where the entries
$b_{\+0}=\rho^w,\ b_{\+e_i}=u_i^w,\ b_{2\+e_i}=\theta^w/\sqrt{2}$
and otherwise $b_{\+\alpha}=0.$ 

In conclusion, the $M$-th order linear Grad moment equations 
(in the plane geometry) are \eqref{eq:G1} with the Grad boundary
condition \eqref{eq:rbc1}, where $\rho^w$ is determined by the
no mass flow condition 
\[w_{\+e_2}-u_2^w=0,\quad \text{at}\ x_2=0.\]

Assume there is a boundary layer near $\partial\Omega=\{x_2=0\}.$
Then we may introduce the fast variables, which vary dramatically 
in the normal direction of the wall and vanish outside the boundary
layer. According to the asymptotic analysis \cite{Sone2007} of
\eqref{eq:G1}, these fast variables may satisfy the linear steady 
moment equations in half-space:
\[
	\ang{\mathcal{M}\xi_2\phi_{\+\alpha}
	\phi_{\+\beta}}\od{w_{\+\beta}}{y} =
	\ang{\mathcal{L}[\mathcal{M}\phi_{\+\beta}]\phi_{\+\alpha}}
	w_{\+\beta} + h_{\+\alpha},\quad |\+\alpha|\leq M,
\]
where $w_{\+\beta}=w_{\+\beta}(y),\ y\in[0,+\infty),$ represents
the fast variables, with $w_{\+\beta}(+\infty)=0$. The non-homogeneous
term $h_{\+\alpha}=h_{\+\alpha}(y)$ is given, arising from the other
contributions in the boundary layer.

The Grad boundary condition becomes
\begin{equation*}
	\left(1-\frac{\chi}{2}\right)\sum_{\beta_2 \text{\ odd}}\ang
	{\xi_2\mathcal{M}\phi_{\+\alpha}\phi_{\+\beta}} (w_{\+\beta}
	+ \bar{w}_{\+\beta})
	= -\frac{\chi}{2}\sum_{\beta_2\text{\ even}}
	\ang{|\xi_2|\mathcal{M}
	\phi_{\+\alpha}\phi_{\+\beta}}(w_{\+\beta}+\bar{w}_{\+\beta}
	-b_{\+\beta}),
\end{equation*}
where $\alpha_2$ is even with $|\+\alpha|\leq M-1,$ and
$\bar{w}_{\+\beta}$ is given by the bulk flow outside the boundary
layer.

\subsection{Main Results}
For the classical linearized Boltzmann operator \cite{CC1989},
we shall write the Grad moment equations in half-space as an 
abstract form
\begin{gather}\label{eq:NH1}
	\+A\od{W}{y} = -\+QW+h,\quad W=W(y),\ y\in[0,+\infty),\\
	W(+\infty)= 0,\notag
\end{gather}
where $\+A=\+A^T\in\bbR^{N\times N},\ \+Q\geq 0$ and $W\in\bbR^N.$
Here $N=\#\{\+\alpha\in\bbN^3:\ |\+\alpha|\leq M\}$ and we assume
\begin{equation}
	\mathrm{Null}(\+A)\cap\mathrm{Null}(\+Q)=\{0\}.
\end{equation}

In this paper, we will discuss the well-posed conditions
of \eqref{eq:NH1} with $h\neq 0,$ i.e., which boundary condition
should be prescribed at $y=0$ to ensure the well-posedness of
\eqref{eq:NH1}. Roughly, we only need to write general
solutions for the ODEs and analyze their stability.

If we can solve the generalized eigenvalue problem
of $(\+A,\+Q)$, we may deal with the characteristic equations
\[ \lambda_i\od{v_i}{y} = -v_i + h_i,\quad v_i(+\infty)=0,\]
where $\lambda_i\in\bbR\cup\{\infty\}$ and $h_i=h_i(y)$ is a 
given 1D function.

If $\lambda_i=0,$ we have $v_i=h_i$ and $v_i(0)=h_i(0)$, which
means that there is no need to prescribe extra boundary conditions
at $y=0.$ If $\lambda<0$, we have
\[ v_i(y) = e^{-\lambda_i^{-1}y}v_i(0) + \lambda_i^{-1}\int_0^y\!\!
e^{-\lambda_i^{-1}(y-s)}h_i(s)\,\mathrm{d}s.\]
So to make sure $v_i(+\infty)=0,$ there must be 
\begin{equation*}
v_i(0) = -\lambda_i^{-1}\int_0^{+\infty}\!\!e^{\lambda_i^{-1}s}
h_i(s)\,\mathrm{d}s,
\end{equation*}
if the integral exists. If $\lambda_i>0,$ from the
above formula we can see that $v_i(0)$ can be arbitrarily given
when
\[
\lim_{y\rightarrow +\infty}	\int_0^y\!\!
e^{-\lambda_i^{-1}(y-s)}h_i(s)\,\mathrm{d}s =0.
\]
Finally, if $\lambda_i=\infty$, formally $v_i$ should be a constant
and must be zero.

Our results are based on these simple observations. Due to the
structure of coefficient matrices, the role of the generalized
eigenvalue decomposition can be replaced by an 
interpretable simultaneous transformation of the matrices
$\+A$ and $\+Q$. We note that a similar transformation appears
in \cite{Bern2008,Bern2010b} in the language of projection operators.

For this purpose, we assume that $\+G\in\bbR^{N\times p}$ is 
the orthonormal basis matrix of $\mathrm{Null}(\+Q),$ and
$\+X\in\bbR^{p\times r}$ is the orthonormal basis matrix 
of $\mathrm{Null}(\+G^T\+A\+G),$ where
\[p=\mathrm{dim}\ \mathrm{Null}(\+Q),\quad 
r=\mathrm{dim}\ \mathrm{Null}(\+G^T\+A\+G).\] 
Since $\mathrm{Null}(\+A)\cap\mathrm{Null}(\+Q)=\{0\}$,
we have
	\[\mathrm{rank}(\+A\+G)=\mathrm{rank}(\+G)=p.\]
So we can let $\+V_1=\+G\+X\in\bbR^{N\times r}$ and construct  
$\+V_2\in\bbR^{N\times p}$ by a Gram-Schmidt orthogonalization
of $\+A\+G$. Then 
\[\mathrm{span}\{\+V_2\}=\mathrm{span}\{\+A\+G\}\]
and $\+V_2^T\+V_1=\+0.$ Let $\+V_3$ be the 
orthogonal complement of $[\+V_1,\+V_2]$.
So $\+V=[\+V_1,\+V_2,\+V_3]$ is orthogonal.
	
For technical reasons, we further define $\+U=[\+U_1,\+U_2,\+U_3]$,
where 
\[\+U_1=\+G\in\bbR^{N\times p},\
	\+U_2=\+A\+G\+X\in\bbR^{N\times r},\ \+U_3=\+V_3.\]
We claim that $\+U$ is invertible. Since $\+V_3^T\+V_2=\+0,$ 
we have $\+V_3^T\+U_2=\+0$. To make $\+U$ invertible, it's enough
to show that 
\begin{equation}\label{eq:v3g}
	\mathrm{span}\{\+V_3\}\cap\mathrm{span}\{\+G\}=\{0\}.
\end{equation}
In fact, if $\+G c_1=\+V_3 c_2$ for some $c_1\in\bbR^p$ and $
c_2\in\bbR^{(N-p-r)}$, then 
	$(\+G c_1)^T\+A\+G= 0$ since $\+V_3^T\+V_2=\+0$.
So $c_1\in\mathrm{span}\{\+X\}$ and  
$\+V_3 c_2\in\mathrm{span}\{\+G\+X\}=\mathrm{span}\{\+V_1\}$. 
Thus, from $\+V_3^T\+V_1=\+0$ we have $c_2=0$, which 
implies (\ref{eq:v3g}).

\begin{lemma}\label{lem:01}
	The transformed matrices $\+A_{ij}=\+U_i^T\+A\+V_j$ and 
	$\+Q_{ij}=\+U_i^T\+Q\+V_j$ satisfy
	\begin{itemize}
		\item $\+Q_{1j}=\+0,\ \+Q_{i1}=\+0,\ \+Q_{33}>0,$ for $i,j=1,2,3.$
		\item $\+A_{31}=\+0,\ \+A_{33}^T=\+A_{33}$ and
			$\mathrm{rank}(\+A_{21})=\mathrm{rank}(\+U_2).$
	\end{itemize}
\end{lemma}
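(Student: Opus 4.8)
The plan is to verify the six assertions by direct substitution into the definitions $\+A_{ij}=\+U_i^T\+A\+V_j$ and $\+Q_{ij}=\+U_i^T\+Q\+V_j$, exploiting three defining relations: since the columns of $\+G$ span $\mathrm{Null}(\+Q)$ we have $\+Q\+G=\+0$ and hence $\+G^T\+Q=\+0$ by symmetry of $\+Q$; since $\+V_1=\+G\+X$ its columns lie in $\mathrm{span}\{\+G\}=\mathrm{Null}(\+Q)$, so $\+Q\+V_1=\+0$; and $\+A\+V_1=\+A\+G\+X=\+U_2$ by the very definition of $\+U_2$. I would handle the $\+Q$-blocks first and the $\+A$-blocks second.

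For the $\+Q$-blocks, the vanishing claims are immediate: $\+Q_{1j}=\+G^T\+Q\+V_j=\+0$ uses $\+G^T\+Q=\+0$, while $\+Q_{i1}=\+U_i^T\+Q\+V_1=\+0$ uses $\+Q\+V_1=\+0$. For $\+Q_{33}=\+V_3^T\+Q\+V_3$ (recall $\+U_3=\+V_3$), positive semidefiniteness of $\+Q$ gives $\+Q_{33}\geq 0$ at once; the content is the strict inequality. I would argue that for any $c\neq 0$ the quadratic form $c^T\+Q_{33}c=(\+V_3 c)^T\+Q(\+V_3 c)$ can vanish only if $\+V_3 c\in\mathrm{Null}(\+Q)=\mathrm{span}\{\+G\}$, using the standard fact that for $\+Q\geq 0$ one has $x^T\+Q x=0\iff \+Q x=0$. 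Then the transversality \eqref{eq:v3g} together with the orthonormality of the columns of $\+V_3$ forces $\+V_3 c=0$ and hence $c=0$, proving $\+Q_{33}>0$.

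For the $\+A$-blocks, $\+A_{31}=\+V_3^T\+A\+V_1=\+V_3^T\+U_2=\+0$, which is exactly the relation $\+V_3^T\+U_2=\+0$ already established in the construction (from $\mathrm{span}\{\+U_2\}\subseteq\mathrm{span}\{\+A\+G\}=\mathrm{span}\{\+V_2\}$ and $\+V_3^T\+V_2=\+0$). Symmetry of $\+A_{33}=\+V_3^T\+A\+V_3$ follows directly from $\+A=\+A^T$. Finally $\+A_{21}=\+U_2^T\+A\+V_1=\+U_2^T\+U_2$ is a Gram matrix, so $\mathrm{rank}(\+A_{21})=\mathrm{rank}(\+U_2)$ by the standard identity $\mathrm{rank}(M^T M)=\mathrm{rank}(M)$.

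The main obstacle --- really the only step that is not pure substitution --- is the strict positivity of $\+Q_{33}$: it is the single place where I must combine the semidefinite structure of $\+Q$ with the transversality $\mathrm{span}\{\+V_3\}\cap\mathrm{span}\{\+G\}=\{0\}$ from \eqref{eq:v3g}. Everything else reduces to the null-space identities $\+Q\+G=\+0$ and $\+Q\+V_1=\+0$, the span inclusion $\mathrm{span}\{\+U_2\}\subseteq\mathrm{span}\{\+V_2\}$, and the symmetry of $\+A$ and $\+Q$.
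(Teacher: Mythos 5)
Your proof is correct and follows essentially the same route as the paper's: direct substitution into the definitions, using $\+Q\+G=\+0$, $\+A\+V_1=\+A\+G\+X=\+U_2$, $\+V_3^T\+V_2=\+0$, and the rank identity $\mathrm{rank}(\+U_2^T\+U_2)=\mathrm{rank}(\+U_2)$. The only difference is one of detail: you spell out why $\+Q_{33}>0$ (the fact that $x^T\+Qx=0$ implies $\+Qx=0$ for $\+Q\geq 0$, combined with the transversality \eqref{eq:v3g} and the orthonormality of the columns of $\+V_3$), a step the paper asserts in a single line.
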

\begin{proof}
	By definition, $\+A_{33}$ is symmetric. 
	Since $\+Q\+G=\+0,$ we have $\+Q_{1j}=\+0$
	and $\+Q_{i1}=\+0.$ From \eqref{eq:v3g}, we have $\+Q_{33}>0.$
	We have $\mathrm{rank}(\+A_{21}) =
	\mathrm{rank}(\+U_2^T\+U_2) = \mathrm{rank}(\+U_2)$. 
	Meanwhile, $\+A_{31}=\+V_3^T\+U_2=\+0$. 
\end{proof}

Thanks to the above transformation, we may consider the 
generalized eigenvalue problem of
$(\+A_{33},\+Q_{33})$ rather than $(\+A,\+Q),$ where $\+Q_{33}$
is symmetric positive definite while $\+Q$ is symmetric positive
semi-definite. Here $\+A_{33}$ and $\+A$ are both symmetric 
matrices which may have zero eigenvalues. 
The choices of $\+V_2$ and $\+V_3$ are not unique, but they
do not affect the results of Lemma \ref{lem:01}.

The general solutions of ODEs should be written with the aid
of the eigenvalue decomposition.
According to Sylvester's law of inertia, $\+Q_{33}^{-1}\+A_{33}$ 
should have the same number of positive, negative and zero 
eigenvalues as $\+A_{33}$. Assume the Cholesky decomposition
\[\+Q_{33}=\+L\+L^T,\]  
then we must have an orthogonal eigenvalue decomposition of the
real symmetric matrix:
	\begin{equation}\label{eq:LAL}
		\+L^{-1}\+A_{33}\+L^{-T}\+R=\+R\+\Lambda.
	\end{equation}
We assume the diagonal matrix
\[
\+\Lambda = \begin{bmatrix}\+\Lambda_+ &
	& \\ & \+0 & \\ & & \+\Lambda_{-}\end{bmatrix},
\]
where $\+\Lambda_+\in\bbR^{n_+\times n_+}$ has postive entries
and $\+\Lambda_-\in\bbR^{n_-\times n_-}$ has negative entries. 
The matrix $\+R=[\+R_+,\+R_0,\+R_-]$ is assumed orthogonal, where the
columns of $\+R_+,\+R_0,\+R_-$ are individually the number of
postive, zero and negative eigenvalues of $\+A_{33}$, i.e.,
$n_+,n_0,n_-$. Suppose
\[\+T=\+L^{-T}\+R=[\+T_+,\+T_0,\+T_-],\]
where the columns of $\+T_+,\+T_0,\+T_-$ are individually 
$n_+,n_0,n_-$. Then $\+T$ is invertible and we have 
\begin{equation}\label{eq:T0}
	\+Q_{33}^{-1}\+A_{33}[\+T_+,\+T_{0},\+T_-] = 
	[\+T_+,\+T_{0},\+T_-]\begin{bmatrix}\+\Lambda_+ &
	& \\ & \+0 & \\ & & \+\Lambda_{-}\end{bmatrix}.
\end{equation}

For a positive number $a$, we define the norm in $L^2(\bbR_+,
L^2(\bbR^N))$ as 
\[ \|h\|_a = \left(\int_0^{+\infty}\!\!e^{2ay}h^Th(y)
\,\mathrm{d}y\right)^{1/2},\]
and the vector norm as 
\[\|g\| = \sqrt{g^Tg}.\]
Then we have the following well-posed theorem, whose complete
proof is put in Section \ref{sec:4}.

\begin{theorem}
	\label{thm:01}
	Assume $a>0$ is a given constant such that $a<1/\lambda_{max}$,
	where $\lambda_{max}$ is the maximal eigenvalue of 
	$\+Q_{33}^{-1}\+A_{33}$. We propose the boundary condition
	\begin{equation}
		\label{eq:B3bc}
		\+B\+V_3^TW(0) = g
	\end{equation}
	for the system \eqref{eq:NH1}, where $\+B\in\bbR^{n_+\times 
	(n_++n_0+n_-)}$ and $g\in\bbR^{n_+}$ are given,
	with constant coefficients.

	For any $h$ with $\|h\|_a<+\infty$ and $g$ with $\|g\|<+\infty$,
	the system \eqref{eq:NH1} has a unique solution and there exists
	a positive constant $C_{a,M}$ independent of the non-homogeneous
	term $h$ such that the following estimation holds:
	\begin{equation}\label{eq:est} 
		\|W\|_a \leq C_{a,M}\left(\|h\|_a + \|g\|\right),
	\end{equation}
	if and only if the boundary condition satisfies the following
	conditions
	\begin{enumerate}
		\item $\mathrm{rank}(\+B\+T_+)=n_+.$
		\item $\+B\+T_0=\+0.$
	\end{enumerate}
	
	What's more, the analytical
	expressions of the solution are given by 
	\eqref{eq:V1T_n}, \eqref{eq:V2T_n} and \eqref{eq:V3T_n}.
\end{theorem}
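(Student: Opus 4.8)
The plan is to diagonalize \eqref{eq:NH1} through the simultaneous transformation of Lemma \ref{lem:01}, solve the resulting block-triangular system one block at a time, and then count which characteristic modes remain free at $y=0$. First I would left-multiply \eqref{eq:NH1} by $\+U^T$ and set $\tilde W=\+V^TW=(\tilde W_1,\tilde W_2,\tilde W_3)$, $\tilde h=\+U^Th$, so the system becomes $\+U^T\+A\+V\od{\tilde W}{y}=-\+U^T\+Q\+V\,\tilde W+\tilde h$ with blocks $\+A_{ij},\+Q_{ij}$. Beyond Lemma \ref{lem:01} I would record that the construction forces $\+A_{11}=\+G^T\+A\+G\+X=\+0$, $\+A_{13}=(\+A\+G)^T\+V_3=\+0$, and (using $\+A\+V_1=\+U_2$) $\+A_{21}=\+U_2^T\+U_2$ invertible. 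The block rows then read $\+A_{12}\od{\tilde W_2}{y}=\tilde h_1$ with $\+A_{12}$ invertible; $\+A_{33}\od{\tilde W_3}{y}=-\+Q_{33}\tilde W_3+b$ with $b$ built from $\tilde W_2,\tilde h_3$; and $\+A_{21}\od{\tilde W_1}{y}=c$ with $c$ built from $\tilde W_2,\tilde W_3,\tilde h_2$ and no $\tilde W_1$ on the right since $\+Q_{21}=\+0$.

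This triangular shape lets me integrate in the order $\tilde W_2\to\tilde W_3\to\tilde W_1$, all with the terminal condition $\tilde W_i(+\infty)=0$ coming from $W(+\infty)=0$. The outer blocks $\tilde W_2$ and $\tilde W_1$ are obtained by integrating from $+\infty$; here I would note that the derivative terms $\od{\tilde W_2}{y},\od{\tilde W_3}{y}$ entering $c$ telescope under $\int_y^{+\infty}$ into $\tilde W_2(y),\tilde W_3(y)$, so no derivative of $h$ ever appears and both outer blocks are fixed by the data alone, needing no boundary input. The core is $\tilde W_3$: substituting $\tilde W_3=\+Tz$ and using \eqref{eq:T0} decouples it into the scalar equations $\lambda_iz_i'=-z_i+s_i$ of the introduction. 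The positive eigenvalues $\+\Lambda_+$ give $n_+$ decaying modes with free $z_+(0)$, the zero eigenvalues give $z_0=s_0$ (algebraically fixed), and the negative eigenvalues give $z_-(0)$ pinned by decay at $+\infty$. Thus exactly the $n_+$ numbers $z_+(0)$ must be prescribed.

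Since $\+V_3^TW(0)=\tilde W_3(0)=\+T_+z_+(0)+\+T_0z_0(0)+\+T_-z_-(0)$, condition \eqref{eq:B3bc} becomes $\+B\+T_+z_+(0)+\+B\+T_0z_0(0)+\+B\+T_-z_-(0)=g$. Unique solvability for $z_+(0)$ against arbitrary data forces the $n_+\times n_+$ matrix $\+B\+T_+$ to be invertible, i.e. $\mathrm{rank}(\+B\+T_+)=n_+$ (Condition 1); its failure yields either a nontrivial homogeneous solution or an unsolvable $g$, breaking uniqueness or existence. For the estimate I would put $W=\+V\tilde W$ and bound each block in $\|\cdot\|_a$: the integrals for $\tilde W_1,\tilde W_2$ and the convolutions for $z_\pm$ are controlled by a weighted Hardy inequality, whose balance condition $\int_0^t e^{2ay}\,\mathrm{d}y\cdot\int_t^{+\infty}e^{-2ay}\,\mathrm{d}y\lesssim1$ holds, while finiteness of the slowest positive mode $\|e^{-\lambda_{max}^{-1}y}z_+(0)\|_a$ is exactly the hypothesis $a<1/\lambda_{max}$. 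The decisive point is that $z_0(0)=s_0(0)$ is a pointwise trace of $h$, which $\|h\|_a$ cannot bound; solving the boundary relation gives $z_+(0)=(\+B\+T_+)^{-1}(g-\+B\+T_0z_0(0)-\+B\+T_-z_-(0))$, so unless $\+B\+T_0=\+0$ the free data---and hence $\|W\|_a$---absorb this uncontrollable trace. Requiring $\+B\+T_0=\+0$ is therefore Condition 2.

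I expect the genuine obstacle to be the necessity of Condition 2. To prove it I would, assuming $\+B\+T_0\neq\+0$, construct a family $h_n$ with $\|h_n\|_a$ uniformly bounded but with the zero-mode trace $s_0(0)$ blowing up---source profiles concentrating at $y=0$---so that $z_+(0)$, and therefore $\|W\|_a$, is unbounded while $\|h_n\|_a+\|g\|$ stays bounded, contradicting \eqref{eq:est}. The necessity of Condition 1 is the linear-algebra dichotomy above. The remaining steps---assembling the per-block bounds into one constant $C_{a,M}$, checking that integration from $+\infty$ is consistent with the ODE and genuinely gives $W(+\infty)=0$, and collecting the block solutions into the explicit formulas \eqref{eq:V1T_n}, \eqref{eq:V2T_n}, \eqref{eq:V3T_n}---are more routine.
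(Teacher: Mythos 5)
Your proposal is correct and follows essentially the same route as the paper: the same simultaneous transformation $\+U^T\+A\+V$ with block-triangular solve order $\tilde W_2\to\tilde W_3\to\tilde W_1$, the same characteristic decomposition $\+V_3^TW=\+T_+z_++\+T_0z_0+\+T_-z_-$ yielding the rank condition from the linear system for $z_+(0)$, and the same counterexample idea (sources concentrating at $y=0$ making the trace $z_0(0)$ blow up while $\|h\|_a$ stays bounded) for the necessity of $\+B\+T_0=\+0$. The only cosmetic difference is that you invoke a weighted Hardy inequality where the paper proves an equivalent Poincar\'e-type and trace inequality by integration by parts.
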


\begin{remark}
	According to the proof in Section \ref{sec:4}, we can write
	$\+V_3^T W(0)$ as
	\[\+V_3^T W(0) = \+T_+ z_+(0) + \+T_{0} z_0(0) + \+T_- z_-(0),\]
	where $z_0$ and $z_-$ are determined by $h$. So the first
	condition in Theorem \ref{thm:01} makes sure the unique
	solvability of $z_+(0)\in\bbR^{n_+}$. The second condition
	shows that $z_+(0)$ will not be affected by $z_0(0)$, i.e.,
	\[z_+(0)=(\+B\+T_+)^{-1}(g-\+B\+T_-z_-(0)).\]
	Because $\|z_0(0)\|$ can not be controlled by $\|g\|$ and
	$\|h\|_a$, removing it from the solution ensures the estimation
	\eqref{eq:est}. In the homogeneous case, we have $h=0$ and
	$z_0=0,\ z_-=0$, which means that there is no need to consider 
	the second condition.
\end{remark}

\begin{remark}
	In Theorem \ref{thm:01}, we restrict the shape of the coefficient
	matrix $\+B$ to be $n_+\times (n_++n_0+n_-)$. Then since 
	$\+T_+\in\bbR^{(n_++n_0+n_-)\times n_+}$, the first condition
	in Theorem \ref{thm:01} shows that $\+B\+T_+$ is invertible.
	The assumption is
	mainly for ease of exposition. If we give the boundary condition
	\[\+B_3\+V_3^TW(0) = \tilde{g},\]
	where $\+B_3\in\bbR^{\tilde{n}\times (n_++n_0+n_-)}$, then the
	solvability requires $\mathrm{rank}(\+B_3\+T_+)=n_+$ and 
	\begin{equation}\label{eq:2e}
		\tilde{g}-\+B_3\+T_0z_0(0)-\+B_3\+T_-z_-(0)\in\mathrm{span}\{
		\+B_3\+T_+\}.
	\end{equation}
	In that case, $\tilde{g}$ may depend on $h$ due to the condition
	\eqref{eq:2e} and $\|\tilde{g}\|$ may not have a upper bound. 
	So it's a little wordy to obtain an analogous 
	estimation as \eqref{eq:est}. The key point is that if we can 
	find a matrix
	$\+C\in\bbR^{\tilde{n}\times n_+}$ such that 
	$\+C^T\+B_3\+T_+$ is invertible and $\+C^T\+B_3\+T_0=\+0$,
	then Theorem \ref{thm:01} helps to give the estimation.
\end{remark}

\section{Modified Boundary Conditions}
\subsection{Instability of the Grad Boundary Condition}
For initial-boundary value problems of the linear hyperbolic 
system, Majda and Osher \cite{Osher1975} emphasize that the 
linear space determined by the boundary condition
should contain the null space of the boundary matrix.
Otherwise, the boundary condition may be unstable.
Based on the similar observation, \cite{Sarna2018} points out
that the linear Grad boundary condition is unstable for the 
initial-boundary value problem. 

For the half-space problem, we will use a simple example to 
illustrate the instability of the Grad boundary condition. 
From the example, we can see that the instability not only
comes from the half-space problem.

If we consider Kramers' problem with the BGK
collision term \cite{Lijun2017}, the simplest moment system when
$M=3$ can read as
\begin{gather}\notag
\od{\sigma_{12}}{y} = 0,\\ \label{eq:K3n}
\od{u_1}{y} + \sqrt{2}\od{f_3}{y}
= -\nu\sigma_{12},\\ \notag
\sqrt{2}\od{\sigma_{12}}{y} = 
-\nu f_3 + h_3,
\end{gather}
where $\nu>0$ is a constant and we write the moment variables
as $\sigma_{12},u_1,f_3$. The term $h_3=h_3(y)$ is the given
non-homogeneous term. The Grad boundary condition reads as
\begin{equation}\label{eq:b01}
\bar{\sigma}+\sigma_{12} + \hat{\chi}\left(u_1+\bar{u}
+\frac{\sqrt{2}}{2}f_3\right) = 0,
\end{equation}
where $\hat{\chi}=\displaystyle
\frac{2\chi}{2-\chi}\frac{1}{\sqrt{2\pi}}$ and
$\bar{\sigma},\ \bar{u}$ are given by the far field.
Since the moment variables vanish when $y=+\infty$, we can 
solve from \eqref{eq:K3n} that 
\begin{equation}\label{eq:sol3}
	\sigma_{12}=0,\ u_1=-\sqrt{2}f_3,\ f_3=h_3/\nu.
\end{equation}
So the Grad boundary condition gives
\[
\bar{u} = -\frac{1}{\hat{\chi}}\bar{\sigma}+\frac{\sqrt{2}}{2}
\frac{h_3(0)}{\nu}.
\]
Only when $\bar{u}$ and $\bar{\sigma}$ satisfy the above relation
can the half-space problem has a unique solution. 
However, the term $h_3(0)$ can not be controlled by the weighted
$L^2$ norm $\|h_3\|_a$. In this sense, we claim that $\bar{u}$ is
unstable because $\|\bar{u}\|$ can not be controlled by $\|h_3\|_a$. 

Using the notations in Theorem \ref{thm:01}, in this example,
we have 
\[ \+A = \begin{bmatrix} 0 & 0 & 1 \\
		0 & 0 & \sqrt{2} \\
		1 & \sqrt{2} & 0 \end{bmatrix},\
	\+Q = \begin{bmatrix} 0 & & \\ & \nu & \\ & & \nu \end{bmatrix},\
	h = \begin{bmatrix} 0 \\ h_3 \\ 0 \end{bmatrix},\
	W = \begin{bmatrix}u_1\\f_3 \\ \sigma_{12}\end{bmatrix}.\]
The previous procedure will give
\[
\+V_1=\begin{bmatrix}1 \\ 0\\ 0 \end{bmatrix},\
   \+V_3=\begin{bmatrix}0 \\ 1\\ 0 \end{bmatrix},\
	   n_+=0,\ n_0=1.
\]
Since $n_+=0,$ Theorem \ref{thm:01} tells that the half-space
problem does not need boundary conditions at $y=0$, where the moment
system directly gives the solution \eqref{eq:sol3}. But the Grad
boundary condition gives one more condition. So this condition
asks for relations of the given values, e.g., $\bar{u}$ and
$\bar{\sigma}$. We should also consider the stability of these
additional solutions.

When $M=5$, we can check that $n_+>0$ and the Grad boundary 
condition does not satisfy 
\[\+B\+T_0=\+0,\]
which shows the instability of the Grad boundary condition
from Theorem \ref{thm:01}. An analogous example is given
in \cite{Sarna2018}.

To overcome this drawback, one may change \eqref{eq:b01} as
\[
c(\bar{\sigma}+\sigma_{12}) + \hat{\chi}\left(u_1+\bar{u}
+\sqrt{2}f_3\right) = 0,
\]
where $c>0$ is an arbitrary positive constant. Then the modified
boundary condition will give
\[
\bar{u} = -\frac{1}{c\hat{\chi}}\bar{\sigma},
\]
which is stable about $h_3$. The boundary conditions 
involving higher-order moment variables can be modified
in the same way.
Authors of \cite{Sarna2018} suggest choosing $c=1$, such that
the modified boundary condition differs from the Grad boundary
condition only by the coefficients before the highest order
moment variables, i.e., $f_3$ in this example. However, it's
not clear whether the modification is optimal in the sense
of providing the most accurate solutions.

\subsection{Modified Boundary Conditions}
We systematically introduce a class of modified boundary
conditions and prove their well-posedness. Assume
\[\mathbb{I}_e=\{\+\alpha\in\bbN^3:\ \alpha_2\text{\ even},\ 
|\+\alpha|\leq M\},\quad
\mathbb{I}_o=\{\+\alpha\in\bbN^3:\ \alpha_2\text{\ odd},\ 
|\+\alpha|\leq M\},\]
and $m=\#\mathbb{I}_e,\ n=\#\mathbb{I}_o.$ So we have $m+n=N$
and $m\geq n.$ Suppose the multi-indices with the even second
component are always ordered before the ones with the odd
second component, e.g., $(a_1,0,a_3)$ is ordered before
$(b_1,1,b_3)$ for any $a_1,a_3,b_1,b_3$. Then we can write the
Grad boundary condition \eqref{eq:rbc1} as 
\begin{equation}\label{eq:mbc1}
\+E\+M(W_o-b_o)+\hat{\chi}\+E\+S(W_e-b_e)=0,
\end{equation}
where $\hat{\chi}=\displaystyle
\frac{2\chi}{2-\chi}\frac{1}{\sqrt{2\pi}}$. The matrix
$\+M\in\bbR^{m\times n}$ has entries 
$\ang{\mathcal{M}\xi_2\phi_{\+\alpha}\phi_{\+\beta}}$
for $\+\alpha\in\mathbb{I}_e$ and $\+\beta\in\mathbb{I}_o.$
And the matrix $\+S\in\bbR^{m\times m}$ has entries
$\ang{\mathcal{M}|\xi_2|\phi_{\+\alpha}\phi_{\+\beta}}$
for $\+\alpha,\ \+\beta\in\mathbb{I}_e$.
The variables are divided as
\[
	W=\begin{bmatrix} W_e \\ W_o \end{bmatrix},\ 
	b=\begin{bmatrix} b_e \\ b_o \end{bmatrix},
\]
where the elements of $W_e\in\bbR^m$ are $w_{\+\alpha},\
\+\alpha\in\mathbb{I}_e$ and the elements of $W_o\in\bbR^n$
are $w_{\+\alpha},\ \+\alpha\in\mathbb{I}_o.$
Every row of $\+E\in\bbR^{n\times m}$ is a unit vector with 
only one component being one, such that the entries of $\+E\+M$ 
are 
$\ang{\mathcal{M}\xi_2\phi_{\+\alpha}\phi_{\+\beta}}$
$,\+\alpha\in\mathbb{I}_e,\ |\+\alpha|\leq M-1,$ 
and $\+\beta\in\mathbb{I}_o.$

Due to the recursion relation and orthogonality of Hermite 
polynomials \cite{Fan_new}, we can write 
\[\+A=\begin{bmatrix} \+0 & \+M \\ \+M^T & \+0\end{bmatrix},
	\quad \+Q = \begin{bmatrix} \+Q_e & \+0\\ \+0 & \+Q_o
	\end{bmatrix},\]
where $\+Q_e\in\bbR^{m\times m}$ and $\+Q_o\in\bbR^{n\times n}.$
By definition, we also know that $\+M$ is of full column rank 
and $\+S$ is symmetric positive definite \cite{Yang2022a}.

The modified boundary condition for the initial-boundary
value problem is
\begin{equation}\label{eq:mbc2}
\+H(W_o-b_o)+\hat{\chi}\+M^T(W_e-b_e)=0,
\end{equation}
where $\+H\in\bbR^{n\times n}$ is an arbitrary symmetric positive
definite matrix. For the half-space problem, the boundary condition
should write as 
\begin{equation}\label{eq:mbc3}
\+H(W_o+\bar{W}_o-b_o)+\hat{\chi}\+M^T(W_e+\bar{W}_e-b_e)=0,
\end{equation}
where $\bar{W}_e$ and $\bar{W}_o$ are given by the flow outside
the boundary layer.

Inspired by the illustrative examples, we can first find a 
matrix $\+C\in\bbR^{n\times n_+}$ and multiply \eqref{eq:mbc3}
left by $\+C^T$. Then, we can use Theorem \ref{thm:01} to check
the well-posedness of the half-space problem. Finally, we may
solve the remaining part of \eqref{eq:mbc3} to obtain
relations between the given vectors, e.g., $\bar{W}_o$ and
$\bar{W}_e$.

Following this way, we first calculate the value of $n_+$
by the special block structure of $\+A$ and $\+Q$.
We may as well write
	\[
		\+G = \begin{bmatrix} \+G_e & \\ & \+G_o \end{bmatrix},\
		\+X = \begin{bmatrix} \+X_e & \\ & \+X_o \end{bmatrix},\
	\]
	where $\+G_e\in\bbR^{m\times p_1}$, $\+G_o\in\bbR^{n\times p_2}$,
	$\+X_e\in\bbR^{m\times r_1}$, $\+X_o\in\bbR^{n\times r_2}$. And
	\[
		\+V_1=\begin{bmatrix} \+Y_1 & \\ & \+Z_1 \end{bmatrix},\	
		\+V_2=\begin{bmatrix} \+Y_2 & \\ & \+Z_2 \end{bmatrix},\
		\+V_3=\begin{bmatrix} \+Y_3 & \\ & \+Z_3 \end{bmatrix},	
	\]
	where $\+Y_1=\+G_e\+X_e,\ \+Z_1=\+G_o\+X_o$, 
	\[\mathrm{span}\{\+Y_2\}=\mathrm{span}\{\+M\+G_o\},\
	\mathrm{span}\{\+Z_2\}=\mathrm{span}\{\+M^T\+G_e\},\]
	$\+Y_3\in\bbR^{m\times (m-r_1-p_2)}$ 
	and $\+Z_3\in\bbR^{n\times (n-r_2-p_1)}$.
	Under these assumptions, we claim that 
	
	\begin{lemma}\label{lem:12}
		$n_+=n-r_2-p_1.$
	\end{lemma}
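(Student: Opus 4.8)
The plan is to compute $n_+$ directly from the inertia of $\+A_{33}$. By Lemma~\ref{lem:01} and Sylvester's law of inertia, $n_+$ (the number of positive eigenvalues of $\+Q_{33}^{-1}\+A_{33}$) coincides with the number of positive eigenvalues of the symmetric matrix $\+A_{33}=\+U_3^T\+A\+V_3=\+V_3^T\+A\+V_3$. Plugging the block form of $\+A$ and the block-diagonal form of $\+V_3$ into this product, a one-line computation gives
\[
\+A_{33}=\begin{bmatrix}\+0&\+P\\\+P^T&\+0\end{bmatrix},\qquad \+P:=\+Y_3^T\+M\+Z_3 .
\]
Since $\+V=[\+V_1,\+V_2,\+V_3]$ is orthogonal and block structured, the even blocks $[\+Y_1,\+Y_2,\+Y_3]$ form an orthogonal basis of $\bbR^m$ and the odd blocks $[\+Z_1,\+Z_2,\+Z_3]$ one of $\bbR^n$; in particular $\mathrm{span}\{\+Y_3\}=\mathrm{span}\{\+Y_1,\+Y_2\}^\perp$ and $\mathrm{span}\{\+Z_3\}=\mathrm{span}\{\+Z_1,\+Z_2\}^\perp$, facts I would record first.

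For a symmetric anti-block-diagonal matrix of the above shape the nonzero eigenvalues are exactly $\pm$ the singular values of $\+P$, so $n_+=n_-=\mathrm{rank}(\+P)$ and $n_0$ accounts for the rest. As $\+P$ has exactly $n-r_2-p_1$ columns (the number of columns of $\+Z_3$), the identity $n_+=n-r_2-p_1$ is equivalent to $\+P$ having full column rank, i.e. to $\+P$ being injective. This is the statement I would actually prove.

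The injectivity of $\+P$ is the heart of the matter and the step I expect to be the main obstacle. I would take $c$ with $\+P c=\+0$ and set $z=\+Z_3 c$, so that $\+M z\in\mathrm{span}\{\+Y_3\}^\perp=\mathrm{span}\{\+Y_1,\+Y_2\}$, hence $\+M z=\+G_e\+X_e a+\+M\+G_o d$ for some $a,d$. The idea is to play the two orthogonality constraints hidden in $z\in\mathrm{span}\{\+Z_3\}$, namely $z\perp\+M^T\+G_e$ and $z\perp\+G_o\+X_o$, against the null-space structure of $\+K:=\+G_e^T\+M\+G_o$, recalling that $\mathrm{span}\{\+X_e\}=\mathrm{Null}(\+K^T)$ and $\mathrm{span}\{\+X_o\}=\mathrm{Null}(\+K)$. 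Left-multiplying the expansion of $\+M z$ by $\+G_e^T$ and using $\+G_e^T\+M z=0$ gives $\+X_e a+\+K d=\+0$; since $\+X_e a\in\mathrm{Null}(\+K^T)$ while $\+K d\in\mathrm{Range}(\+K)=\mathrm{Null}(\+K^T)^\perp$, both terms vanish and $a=\+0$. Then $\+M(z-\+G_o d)=\+0$, and the full column rank of $\+M$ forces $z=\+G_o d$. Finally $z\perp\+G_o\+X_o$ gives $\+X_o^T d=\+0$ whereas $\+K d=\+0$ gives $d\in\mathrm{span}\{\+X_o\}$, so $d=\+0$, whence $z=\+0$ and $c=\+0$.

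Two auxiliary points would have to be checked along the way to justify the dimension $n-r_2-p_1$ itself: that $\+M\+G_o$ and $\+M^T\+G_e$ have full column rank $p_2$ and $p_1$ respectively, both of which follow from $\mathrm{Null}(\+A)\cap\mathrm{Null}(\+Q)=\{0\}$, and the orthogonal decomposition $\bbR^{p_1}=\mathrm{Null}(\+K^T)\oplus\mathrm{Range}(\+K)$ used above. Beyond these, the argument is bookkeeping with the even/odd block structure, so I expect the only genuine difficulty to be organizing the injectivity chain above correctly.
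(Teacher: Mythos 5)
Your proposal is correct and takes essentially the same route as the paper's proof: both reduce the problem to the symmetric anti-block-diagonal matrix whose off-diagonal block is $\+P=\+Y_3^T\+M\+Z_3$, invoke the $\pm$ pairing of its nonzero eigenvalues, and thereby reduce the identity $n_+=n-r_2-p_1$ to the full column rank of $\+P$, which both arguments establish by decomposing $\+M\+Z_3 c$ in the basis $[\+Y_1,\+Y_2,\+Y_3]$ and using the full column rank of $\+M$. The only difference is minor: where the paper finishes by citing the already-established disjointness \eqref{eq:v3g}, i.e. $\mathrm{span}\{\+Z_3\}\cap\mathrm{span}\{\+G_o\}=\{0\}$, you re-derive that disjointness inline via the null spaces of $\+K=\+G_e^T\+M\+G_o$.
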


	\begin{proof}
	We may as well write the Cholesky 
	decomposition $\+Q_{33}=\+L\+L^T$ as  
		\[\+L=\begin{bmatrix}\+L_e & \\ & \+L_o\end{bmatrix},\
	\+L_e\in\bbR^{(m-r_1-p_2)\times (m-r_1-p_2)},\
		\+L_o\in\bbR^{(n-r_2-p_1)\times (n-r_2-p_1)}.\] 
		Then we have
	\[
		\+L^{-1}\+A_{33}\+L^{-T} = \begin{bmatrix}
		\+0 & \+L_e^{-1}\+Y_3^T\+M\+Z_3\+L_o^{-T} \\ 
		\+L_o^{-1}\+Z_3^T\+M^T\+Y_3\+L_e^{-T} & \+0 
		\end{bmatrix}.
	\]
	We will show that 
		$\+Y_3^T\+M\+Z_3$ is of full column rank,
	and the positive as well as negative 
	eigenvalues of $\+L^{-1}\+A_{33}\+L^{-T}$ should appear in 
	pair. These facts will lead to $n_+=n-r_2-p_1.$

	Suppose  $\+Y_3^T\+M\+Z_3 x = 0$ for some 
	$x\in\bbR^{n-r_2-p_1}$. Since $[\+Y_1,\+Y_2,\+Y_3]$
	is orthogonal, there exists $x_1\in\bbR^{r_1}$ and
	$x_2\in\bbR^{p_2}$ such that
\begin{equation}\label{eq:tem2}
\+M\+Z_3 x = \+Y_1 x_1+\+Y_2 x_2. 
\end{equation}
Since $\+Z_3^T\+M^T\+Y_1=\+0$ and $\+Y_2^T\+Y_1=\+0$, we 
have $x_1=0.$ The relation \eqref{eq:v3g} implies that 
\[\mathrm{span}\{\+Z_3\}\cap\mathrm{span}\{\+G_o\}=\{0\}.\]
So there must be $x=0$ 
since $\+M$ and $\+Z_3$ are of full column rank. This shows that
$\+Y_3^T\+M\+Z_3$ is of full column rank.

For eigenvalues and eigenvectors of the symmetric matrix,
we introduce a general conclusion.
Let $\+D \in \bbR^{\alpha \times \beta}$ and
$\mathrm{rank}(\+D)=\gamma$. Then 
		\[\tl{D}:=\begin{bmatrix} \+0 & \+D \\ \+D^T & \+0
		\end{bmatrix}\]
has $\alpha + \beta - 2\gamma$ zero eigenvalues,
$\gamma$ positive eigenvalues and $\gamma$ negative eigenvalues.
	
	In fact, the symmetric matrix $\tl{D}$ must have $\alpha+\beta$ real
	eigenvalues. Assume $\lambda\in\bbR$ is an eigenvalue, 
	then there exists $x\in\bbR^{\alpha}$ and 
	$y\in\bbR^{\beta}$ such that 
	\[\begin{bmatrix} \+0 & \+D \\ \+D^T & \+0 \end{bmatrix}
		\begin{bmatrix} x \\ y \end{bmatrix} = \lambda
	\begin{bmatrix} x \\ y \end{bmatrix} \quad\Rightarrow
		\quad \begin{bmatrix} \+0 & \+D \\ \+D^T & \+0
	   	\end{bmatrix} \begin{bmatrix} x \\ -y \end{bmatrix}
	   	= -\lambda \begin{bmatrix} x \\ -y \end{bmatrix}.\]
		So $-\lambda$ is also an eigenvalue, which implies that 
		$\tl{D}$ has the same number of positive and negative 
		eigenvalues. Since $\mathrm{rank}(\+D)
			=\mathrm{rank}(\+D^T)=\gamma$, there must be
			$\alpha+\beta-2\gamma$ zero eigenvalues.

		Applying the above result to $\+L^{-1}\+A_{33}\+L^{-T}$,
			we then have $n_+=n-r_2-p_1.$
\end{proof}

	Incidentally, applying the above result in Lemma \ref{lem:12} to
	$\+G^T\+A\+G$, we have $r_1+p_2=r_2+p_1.$
	Then the required matrix $\+C\in\bbR^{n\times n_+}$ can be
	found.

\begin{lemma}
	\label{lem:11}
	Let $\+B_3=[\hat{\chi}\+M^T,\+H]\+V_3$ and $\+C=\+Z_3$.
	Then when $\chi\in[0,1]$, we have 
	\begin{itemize}	
		\item[i.] $\+C^T\+B_3\+T_+$ is invertible.
		\item[ii.] $\+C^T\+B_3\+T_0=\+0.$
	\end{itemize}
\end{lemma}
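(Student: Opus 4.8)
The plan is to make the block form of $\+C^T\+B_3$ explicit, to diagonalize the reduced pencil $(\+A_{33},\+Q_{33})$ through a singular value decomposition, and then to reduce both assertions to elementary facts about positive (semi)definite matrices. First I would compute $\+C^T\+B_3$ from the definitions. Since $\+C=\+Z_3$ and $\+V_3=\mathrm{diag}(\+Y_3,\+Z_3)$,
\[
	\+C^T\+B_3=\+Z_3^T[\hat\chi\+M^T,\+H]\+V_3
	=[\hat\chi\,\+Z_3^T\+M^T\+Y_3,\ \+Z_3^T\+H\+Z_3]
	=[\hat\chi\+D^T,\ \+H_{33}],
\]
where $\+D=\+Y_3^T\+M\+Z_3$ is the matrix shown to have full column rank $n_+=n-r_2-p_1$ in the proof of Lemma \ref{lem:12}, and $\+H_{33}:=\+Z_3^T\+H\+Z_3$ is symmetric positive definite because $\+H\succ0$ and $\+Z_3$ has orthonormal columns.

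Next I would produce explicit formulas for $\+T_+$ and $\+T_0$. Recall from the proof of Lemma \ref{lem:12} that $\+L^{-1}\+A_{33}\+L^{-T}$ is block anti-diagonal with off-diagonal block $\tl{D}=\+L_e^{-1}\+D\+L_o^{-T}$, which again has full column rank $n_+$. Taking a thin singular value decomposition $\tl{D}=\+P\+\Sigma\+W^T$ (so $\+P^T\+P=\+I$, $\+\Sigma\succ0$ diagonal, $\+W$ orthogonal) and letting $\+P_\perp$ be an orthonormal basis of $\mathrm{Null}(\tl{D}^T)$, the $\pm$ pairing of eigenvectors used in that proof lets me take
\[
	\+R_{\pm}=\frac{1}{\sqrt2}\begin{bmatrix}\+P\\\pm\+W\end{bmatrix},\quad
	\+R_0=\begin{bmatrix}\+P_\perp\\\+0\end{bmatrix}.
\]
Since $\+T=\+L^{-T}\+R$ with $\+L=\mathrm{diag}(\+L_e,\+L_o)$, this gives $\+T_+=\frac{1}{\sqrt2}\begin{bmatrix}\+L_e^{-T}\+P\\\+L_o^{-T}\+W\end{bmatrix}$ and $\+T_0=\begin{bmatrix}\+L_e^{-T}\+P_\perp\\\+0\end{bmatrix}$.

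With these formulas both claims are short. The identity $\+D^T\+L_e^{-T}=\+L_o\tl{D}^T=\+L_o\+W\+\Sigma\+P^T$ together with $\+P^T\+P_\perp=\+0$ gives claim (ii) at once, since $\+C^T\+B_3\+T_0=\hat\chi\+D^T\+L_e^{-T}\+P_\perp=\+0$. The same identity together with $\+P^T\+P=\+I$ gives
\[
	\+C^T\+B_3\+T_+=\frac{1}{\sqrt2}\big(\hat\chi\,\+L_o\+W\+\Sigma+\+H_{33}\+L_o^{-T}\+W\big).
\]
To prove invertibility I would assume $\+C^T\+B_3\+T_+x=0$ and substitute $y=\+W x$, $z=\+L_o^{-T}y$; writing $\+\Sigma'=\+W\+\Sigma\+W^T\succ0$ and using $\+W^T\+W=\+I$, this collapses to $(\hat\chi\,\+L_o\+\Sigma'\+L_o^T+\+H_{33})z=0$. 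Because $\chi\in[0,1]$ forces $\hat\chi\ge0$, the matrix $\hat\chi\,\+L_o\+\Sigma'\+L_o^T$ is positive semidefinite, so its sum with the positive definite $\+H_{33}$ is positive definite; hence $z=0$ and therefore $x=0$, proving claim (i).

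The block multiplication and the two substitutions are routine; the step needing care is the middle one, namely deriving the explicit eigenstructure of the pencil by recognizing that the thin SVD of $\tl{D}$ yields exactly the paired eigenvectors $\+R_{\pm}$ and the purely even-block null vectors $\+R_0$, and transporting these correctly through $\+T=\+L^{-T}\+R$. Once that is in place, the decisive observation for claim (i) is simply that the congruence $z=\+L_o^{-T}\+W x$ turns the mixed expression for $\+C^T\+B_3\+T_+$ into the sum of a positive semidefinite and a positive definite matrix, where the sign constraint $\hat\chi\ge0$ coming from $\chi\in[0,1]$ is precisely what the argument requires.
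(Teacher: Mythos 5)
Your proof is correct and follows essentially the same route as the paper: both exploit the block anti-diagonal form of $\+L^{-1}\+A_{33}\+L^{-T}$, the full column rank of $\+Y_3^T\+M\+Z_3$, the paired $\pm$ eigenvector structure with the $1/\sqrt{2}$ normalization, and the decisive fact that $\hat{\chi}\geq 0$ together with $\+Z_3^T\+H\+Z_3\succ 0$ makes the relevant matrix a sum of a positive semidefinite and a positive definite term. Your thin SVD of $\+L_e^{-1}\+Y_3^T\+M\+Z_3\+L_o^{-T}$ is just an explicit construction of the block-structured eigenbasis ($\+R_e=\+P/\sqrt{2}$, $\+R_o=\+W/\sqrt{2}$, $\+\Lambda_+=\+\Sigma$) that the paper simply assumes with the normalization $\+R_e^T\+R_e=\+R_o^T\+R_o=\frac{1}{2}\+I_{n_+}$.
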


\begin{proof}
	Thanks to the block structure of $\+L^{-1}\+A_{33}\+L^{-T}$,
	we can assume 
	\[\+R_+ = \begin{bmatrix}
	\+R_e\\ \+R_o \end{bmatrix},\quad \+R_e\in\bbR^{(m-r_1-p_2)\times
		n_+},\quad \+R_o\in\bbR^{n_+\times n_+},\]
	where $\displaystyle 
	\+R_e^T\+R_e=\+R_o^T\+R_o=\frac{1}{2}\+I_{n_+}$ 
	with the identity matrix $\+I_{n_+}$. We have 
\begin{eqnarray*}
	\+Z_3^T\+B_3\+T_+  
	&=& \hat{\chi}\+Z_3^T\+M^T\+Y_3\+L_e^{-T}\+R_e + 
	\+Z_3^T\+H\+Z_3\+L_o^{-T}\+R_o \\
	&=& \hat{\chi}\+L_o\+R_o\+\Lambda_+ +
	\+Z_3^T\+H\+Z_3\+L_o^{-T}\+R_o.
\end{eqnarray*}
Since $\hat{\chi}\geq 0$ when $\chi\in[0,1]$, for any
$x\in\bbR^{n_+}$, we have
\begin{eqnarray}\notag
	x^T\+R_o^T\+L_o^{-1}\+Z_3^T\+B_3\+T_+ x &=&
	\frac{1}{2}\hat{\chi}x^T\+\Lambda_+ x
	+ x^T\+R_o^T\+L_o^{-1}\+Z_3^T\+H\+Z_3\+L_o^{-T}\+R_o x \\
	&\geq& x^T\+R_o^T\+L_o^{-1}\+Z_3^T\+H\+Z_3\+L_o^{-T}\+R_o x
	\geq 0, 
	\label{eq:tm33}
\end{eqnarray}
where the equality holds if and only if $x=0.$ This shows that 
$\+R_o^T\+L_o^{-1}\+Z_3^T\+B_3\+T_+$ is symmetric positive
definite. So $\+Z_3^T\+B_3\+T_+$ is invertible. 

   	Then we show 
	$\+Z_3^T\+B_3\+T_0=\+0$.
By definition, we have $\+Q_{33}^{-1}\+A_{33}\+T_0=\+0,$
which shows that $\+A_{33}\+T_0=\+0.$ 
Due to the block structure of $\+A_{33}$, the matrix $\+T_0$
can write as 
\[\+T_0=\begin{bmatrix} \+T^* \\ \+0 \end{bmatrix},\ 
	\+T^*\in\bbR^{(m-r_1-p_2)\times n_0},\]
since $\+Y_3^T\+M\+Z_3$ is of full column rank.
This gives $\+Z_3^T\+M^T\+Y_3\+T^*=\+0$ and we have
\[ 
	\+Z_3^T\+B_3\+T_0 = \hat{\chi}\+Z_3^T\+M^T\+Y_3\+T^*=\+0,
\]
which completes the proof.
\end{proof}

With the aid of the above lemmas, we state the well-posedness
theorem as follows:

\begin{theorem}
	Suppose $\hat{\chi}>0$ and $r_2=0$. For any $h$ with 
	$\|h\|_a<+\infty$ and $g_1=\bar{W}_o-b_o$,\
	$g_2=(\+I_m-\+G_e\+G_e^T)(\bar{W}_e-b_e)$ with $\|g_1\|<+\infty,
	\ \|g_2\|<+\infty$, the moment system \eqref{eq:NH1} with the
	boundary condition \eqref{eq:mbc3} has
	a unique solution of $W$ and $\+G_e^T(\bar{W}_e-b_e)$, and the 
	solution satisfies the estimation
	\begin{gather*}
		\|W\|_a \lesssim \|h\|_a+\|g_1\|+\|g_2\|,\\
		\|\+G_e^T(\bar{W}_e-b_e)\| \lesssim \|h\|_a+\|g_1\|+\|g_2\|,
	\end{gather*}
where $a\lesssim b$ represents that
there exists a constant $c>0$ such that $a\leq cb.$
\end{theorem}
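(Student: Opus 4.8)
The plan is to treat the $n$ scalar equations in the modified boundary condition \eqref{eq:mbc3} as two coupled blocks, exploiting that $r_2=0$ forces $\+Z_1$ to be empty, so that $[\+Z_2,\+Z_3]$ is an orthonormal basis of $\bbR^n$. Left-multiplying \eqref{eq:mbc3} by $\+C^T=\+Z_3^T$ should isolate the part that fixes $W$, while left-multiplying by $\+Z_2^T$ should isolate the remaining $p_1$ scalar equations that determine the unknown far-field combination $\+G_e^T(\bar W_e-b_e)$. The counting is consistent because Lemma \ref{lem:12} gives $n_+=n-p_1$, so the two projections carry $n_+$ and $p_1$ equations respectively.

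For the first block I would set $\+B=\+C^T\+B_3=\+Z_3^T[\hat\chi\+M^T,\+H]\+V_3$ and recast the $\+Z_3^T$-projection of \eqref{eq:mbc3} into the form $\+B\+V_3^TW(0)=g$ treated by Theorem \ref{thm:01}. Here the contribution of $\+V_1$ drops because $\+Z_3^T\+M^T\+Y_1=\+0$ (already used in Lemma \ref{lem:12}), which removes the uncontrollable even null-direction $\+V_1^TW(0)$ from the equation; this is precisely the structural reason the modified condition is admissible while the Grad condition is not. Lemma \ref{lem:11} then supplies exactly the two hypotheses discussed in the second remark after Theorem \ref{thm:01}: part (i) gives $\mathrm{rank}(\+B\+T_+)=n_+$ and part (ii) gives $\+B\+T_0=\+0$. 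Applying Theorem \ref{thm:01} yields existence and uniqueness of $W$ together with the estimate \eqref{eq:est} in the form $\|W\|_a\lesssim\|h\|_a+\|g\|$, and it remains to check $\|g\|\lesssim\|h\|_a+\|g_1\|+\|g_2\|$. For this I would write $\bar W_e-b_e=\+G_e\+G_e^T(\bar W_e-b_e)+g_2$ and use $\+Z_3^T\+M^T\+G_e=\+0$ (equivalent to $\mathrm{span}\{\+Z_2\}=\mathrm{span}\{\+M^T\+G_e\}$) to eliminate the unknown far-field piece from $g$, leaving only $g_1$, $g_2$ and boundary contributions of $W$.

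For the second block I would solve the $\+Z_2^T$-projection for $\+G_e^T(\bar W_e-b_e)$. Because $r_2=0$, the matrix $\+M^T\+G_e$ has full column rank $p_1$ (this is exactly where $\mathrm{Null}(\+A)\cap\mathrm{Null}(\+Q)=\{0\}$ enters), and since $\mathrm{span}\{\+Z_2\}=\mathrm{span}\{\+M^T\+G_e\}$ the square coefficient $\hat\chi\+Z_2^T\+M^T\+G_e\in\bbR^{p_1\times p_1}$ is invertible; this gives both the unique solvability of $\+G_e^T(\bar W_e-b_e)$ and an explicit expression for it in terms of $W(0)$, $g_1$ and $g_2$.

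The main obstacle, and the technical heart of both estimates, is controlling the boundary quantities that actually appear in \eqref{eq:mbc3}, namely $\+M^TW_e(0)$ and $W_o(0)$. These cannot be bounded pointwise by $\|h\|_a$ — the failure to do so is exactly the Grad instability through $h(0)$ — but the modified condition only ever uses these two combinations, and they are controllable. Integrating the two blocks of \eqref{eq:NH1}, $\+M W_o'=-\+Q_eW_e+h_e$ and $\+M^TW_e'=-\+Q_oW_o+h_o$, over $[0,+\infty)$ with $W(+\infty)=0$, the weight, and Cauchy--Schwarz, I expect $\|\+M^TW_e(0)\|+\|W_o(0)\|\lesssim\|W\|_a+\|h\|_a$, where full column rank of $\+M$ recovers all of $W_o(0)$; a further useful identity is $\+G_e^T\+M W_o(0)=-\int_0^{+\infty}\+G_e^Th_e\,\mathrm{d}y$ (using $\+G_e^T\+Q_e=\+0$), which keeps the unknown far-field out of the forcing. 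Feeding these into the weighted energy identity from integrating $\frac{\mathrm{d}}{\mathrm{d}y}(e^{2ay}W^T\+AW)$ — whose boundary term is $W(0)^T\+AW(0)=2(\+M^TW_e(0))^TW_o(0)$, which \eqref{eq:mbc3} together with $\+H$ symmetric positive definite turns into a controlled quantity — and invoking $a<1/\lambda_{max}$ so that $\+Q_{33}-a\+A_{33}$ is positive definite on the $\+V_3$-block, the a priori bound should close after absorbing the cross terms $\|W\|_a\|h\|_a$ and $\|W\|_a\|g_i\|$ by Young's inequality. The delicate point I expect to fight with is the self-consistency between $\|W\|_a$ and the far-field unknown $\+G_e^T(\bar W_e-b_e)$, each at first controlled only through the other: closing the loop requires combining the energy estimate with the boundary-value bounds so that the uncontrollable pointwise data $h(0)$ never enters.
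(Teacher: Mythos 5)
Your overall architecture is the same as the paper's: split the $n$ boundary equations by projecting with $\+Z_3^T$ (giving $n_+$ equations of the form treated by Theorem \ref{thm:01}, admissible by Lemma \ref{lem:11}) and with a complementary $p_1$-dimensional block that determines $\+G_e^T(\bar{W}_e-b_e)$; the paper uses $[\+G_o,\+M^T\+G_e\+X_e]^T$ where you use $\+Z_2^T$, which is an equivalent choice since both complements of $\+Z_3$ pair invertibly with $\+M^T\+G_e$. Your block-two argument is sound and in fact cleaner than the paper's: integrating the two halves of the system gives $\+M^TW_e(0)=\int_0^{+\infty}(\+Q_oW_o-h_o)\,\mathrm{d}s$ and $\+MW_o(0)=\int_0^{+\infty}(\+Q_eW_e-h_e)\,\mathrm{d}s$, hence $\|\+M^TW_e(0)\|+\|W_o(0)\|\lesssim\|W\|_a+\|h\|_a$, and since the modified condition only involves these two traces, the far-field unknown is controlled once $\|W\|_a$ is.

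The genuine gap is in block one, and you half-see it yourself. After the $\+Z_3^T$ projection, the data $g$ of the condition $\+B\+V_3^TW(0)=g$ still contains traces of the unknown solution (the terms $\+Z_3^T\+M^T\+Y_2\+Y_2^TW_e(0)$ and $\+Z_3^T\+H\+Z_2\+Z_2^TW_o(0)$). Theorem \ref{thm:01} requires $g$ to be \emph{given}; bounding these traces only by $\|W\|_a+\|h\|_a$, as you propose, makes both the application of the theorem and the resulting estimate circular, and your proposed escape --- the weighted energy identity for $\frac{\mathrm{d}}{\mathrm{d}y}\left(e^{2ay}W^T\+AW\right)$ plus Young's inequality --- cannot close on its own, because the bulk form $W^T(\+Q-a\+A)W$ is coercive only on the $\+V_3$-block; on the directions associated with $\mathrm{Null}(\+Q)$ it gives nothing, so the identity does not control $\|W\|_a^2$. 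The repair is already in your toolkit: the $W$-traces entering $g$ are exactly the $\+V_2$-components $\+Y_2^TW_e(0)$ and $\+Z_2^TW_o(0)$, and since $\+Y_2$ spans $\mathrm{span}\{\+M\+G_o\}$ and $\+Z_2$ spans $\mathrm{span}\{\+M^T\+G_e\}$, your own integration identities $\+G_o^T\+M^TW_e(0)=-\int_0^{+\infty}\+G_o^Th_o\,\mathrm{d}s$ and $\+G_e^T\+MW_o(0)=-\int_0^{+\infty}\+G_e^Th_e\,\mathrm{d}s$ (this is precisely \eqref{eq:V2T_n}) show they are explicit functionals of $h$ alone, with norm $\lesssim\|h\|_a$ by the trace inequality \eqref{eq:trace} --- independent of the boundary data and of $\|W\|_a$. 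With that observation, $g$ is genuinely given, Theorem \ref{thm:01} applies directly, no energy estimate is needed, and your block-two argument then finishes the proof; this is exactly how the paper closes the loop.
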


\begin{proof}
	The assumption $r_2=0$ makes the matrices $\+X_o$ and 
	$\+Z_1$ vanish, which simplifies the proof. Note that
	$\+V$ is orthogonal. Plugging $\+V\+V^T$ before the variables
	$W_e$ and $W_o$ in \eqref{eq:mbc3}, and decomposing $\bar{W}_e-b_e$
	as
	\[\bar{W}_e-b_e = \+G_e\+G_e^T(\bar{W}_e-b_e) + 
		(\+I_m-\+G_e\+G_e^T)(\bar{W}_e-b_e),
	\]
	the boundary condition can be equivalently written as 
	\begin{eqnarray}
		&&\hat{\chi}\+M^T\left(\+Y_1\+Y_1^TW_e(0)+\+G_e\+G_e^T
		(\bar{W}_e-b_e)\right) \notag 
		+ \+B_3\+V_3^TW(0) \\ &=&
		-\+H(\bar{W}_o-b_o)-\hat{\chi}\+M^T(\+I_m-\+G_e\+G_e^T)
				(\bar{W}_e-b_e) \label{eq:0923}
				-[\hat{\chi}\+M^T,\+H]\+V_2\+V_2^TW(0), 
	\end{eqnarray}
	where $\+B_3=[\hat{\chi}\+M^T,\+H]\+V_3$ as in Lemma \ref{lem:11}.

	We extract the lower block of $\+U$ as
	$\tl{U}=[\+G_o,\+M^T\+G_e\+X_e,\+Z_3]\in\bbR^{n\times n}.$
	Then $\tl{U}$ is invertible since $\+U$ is invertible.
	Multiplying \eqref{eq:0923} left by $\tl{U} ^T$, the boundary
	condition can be divided into three parts.

	The first part corresponds to multiply \eqref{eq:0923} left
	by $\+Z_3^T$. Since $\mathrm{span}\{\+M^T\+G_e\}
	=\mathrm{span}\{\+Z_2\}$ and $\+Y_1=\+G_e\+X_e$, we have
	$\+Z_3^T\+M^T\+Y_1=\+Z_3^T\+M^T\+G_e=\+0$ because
	$\+Z_3^T\+Z_2=\+0.$ Now the boundary condition gives
	\begin{eqnarray}\label{eq:sbc1}
		\+Z_3^T\+B_3\+V_3^TW(0)=
	   	-\+Z_3^T\+H g_1-\hat{\chi}\+Z_3^T\+M^T g_2 
				-\+Z_3^T[\hat{\chi}\+M^T,\+H]\+V_2\+V_2^TW(0),
	\end{eqnarray}
	which satisfies the well-posed conditions in Theorem \ref{thm:01}
	due to Lemma \ref{lem:11}.
	From the proof of Theorem \ref{thm:01}, we know
	\[\|\+V_2^TW(0)\|\lesssim \|h\|_a.\]
	So from Theorem \ref{thm:01}, the half-space system \eqref{eq:NH1}
	with the boundary condition \eqref{eq:sbc1} has a unique
	solution of $W$ and 
	\begin{gather*}
		\|W\|_a \lesssim \|h\|_a+\|g_1\|+\|g_2\|.
	\end{gather*}

	The remaining part of the boundary condition \eqref{eq:0923}
	should be compatible. Note that $r_1+p_2=r_2+p_1=p_1.$ 
	We write $\+U_0=[\+G_o,\+M^T\+G_e\+X_e]\in\bbR^{n\times p_1}$.
	Then the remaining part of the boundary condition corresponds
	to multiply \eqref{eq:0923} left by $\+U_0^T.$ We claim that
	\[\mathrm{rank}(\+U_0^T\+M^T\+G_e)=p_1.\]
	Otherwise, there exists $x\in\bbR^{p_1}$ such that 
	$x\in\mathrm{span}\{\+U_0\}$ and $x^T\+M^T\+G_e=0.$
	Since $\+Z_1$ vanishes and $[\+Z_2,\+Z_3]$ is orthogonal, we have
	$x\in\mathrm{span}\{\+Z_3\}$. But $[\+U_0,\+Z_3]$ is invertible.
	So $x=0$ and $\mathrm{rank}(\+U_0^T\+M^T\+G_e)=p_1.$

	Thus, when $W$ is known, we can solve a unique
	$\+G_e^T(\bar{W}_e-b_e)$ by multiplying \eqref{eq:0923} left
	with $\+U_0^T.$ From the proof of Theorem \ref{thm:01},
	only the trace $z_0(0)$ included in $\+V_3^TW(0)$ can not be
	controlled by $\|h\|_a$.
	Fortunately, from \eqref{eq:V1T_n}, the term
	$\+A_{21}\+V_1^TW(0)+\+A_{23}\+V_3^TW(0)$ 
	will not contain $z_0(0)$. So we have the estimation
	\[
		\|\+G_e^T(\bar{W}_e-b_e)\|\lesssim \|g_1\|+\|g_2\|
		+\|h\|_a.
	\]

	This completes the proof.
\end{proof}

In conclusion, the boundary condition for the moment equations
in half-space is different from the case of initial-boundary value 
problems. Only when the variables given by the outside flow, e.g.,
$\bar{W}_e$ and $\bar{W}_o,\ $satisfy
some relations, can the half-space problem has a unique solution.
What's more, in the initial-boundary value problem, the boundary
condition \eqref{eq:mbc2} should be equipped with the no mass
flow condition $w_{\+e_2}=0$ to determine the extra variable
$\rho^w$ in $b_e$. While in the half-space problem, $w_{\+e_2}=0$
is automatically ensured by the system \eqref{eq:NH1} and
we do not need extra conditions at $y=0$ other than \eqref{eq:mbc3}.

Another possible choice of $\+H$ is $\+H=\+M^T\+S^{-1}\+M$.
The physical meaning of this construction is imposing 
alternative continuity of fluxes at the boundary.
The condition can be put into the framework of Grad's, i.e.,
testing the Maxwell boundary condition by some odd polynomials.
But in this case the test functions are different from the ones
in the Grad boundary condition. To see this, we first test the
Maxwell boundary condition with $\xi_2\phi_{\+\alpha},
\+\alpha\in\mathbb{I}_e$ to get 
\[
\+M(W_o+\bar{W}_o-b_o)+\hat{\chi}\+S(W_e+\bar{W}_e-b_e)=0.
\]
The test functions in the Grad boundary condition are
$\xi_2\phi_{\+\alpha}, \+\alpha\in\mathbb{I}_e, |\+\alpha|\leq M-1.$ 
Alternatively, we can combine the polynomials $\xi_2\phi_{\+\alpha},
\+\alpha\in\mathbb{I}_e,$ linearly to get
\[
\+M^T\+S^{-1}\+M(W_o+\bar{W}_o-b_o)+\hat{\chi}\+M^T(W_e+\bar{W}_e-b_e)=0,
\]
which gives $\+H=\+M^T\+S^{-1}\+M.$ The construction
is convenient to use for arbitrary order moment equations.

\section{Details in the Proof}
\label{sec:4}
The section aims to fill in the missing details in the proof
of Theorem \ref{thm:01}. The proof mainly contains two steps,
where we first explicitly write the analytical solution
of \eqref{eq:NH1} and then estimate it.

\begin{lemma} 
	\label{lem:41}
	Under the assumption of Theorem \ref{thm:01}, the system
	has a unique solution given by \eqref{eq:V1T_n}, \eqref{eq:V2T_n}
	and \eqref{eq:V3T_n}, if and only if 
		\[ \mathrm{rank}(\+B\+T_+)=n_+.\]
\end{lemma}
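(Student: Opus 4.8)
The plan is to diagonalize the system using the simultaneous transformation already prepared. First I would multiply \eqref{eq:NH1} on the left by $\+U^T$ and substitute $W=\+V\+V^TW$, so that the unknowns become the three blocks $\+V_1^TW,\+V_2^TW,\+V_3^TW$ and the coefficients become the blocks $\+A_{ij},\+Q_{ij}$ of Lemma \ref{lem:01}. By that lemma the transformed $\+Q$ loses its first block row and column ($\+Q_{1j}=\+Q_{i1}=\+0$), while $\+A_{31}=\+0$, $\+Q_{33}>0$, and $\+A_{21}=\+U_2^T\+U_2$ is invertible (here $\+A\+V_1=\+A\+G\+X=\+U_2$). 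Two further structural facts, which I would verify at the outset, do the decoupling: since $\mathrm{span}\{\+V_2\}=\mathrm{span}\{\+A\+G\}$ and $\+A$ is symmetric, $\+V_3\perp\+A\+G$ gives $\+A_{13}=\+G^T\+A\+V_3=\+0$; and $\+A_{12}=\+G^T\+A\+V_2$ is invertible because it equals $(\+A\+G)^T(\+A\+G)$ composed with the invertible Gram--Schmidt change of basis, and $\+A\+G$ has full column rank $p$.

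With these reductions the block rows decouple triangularly. The first row collapses to $\+A_{12}\od{(\+V_2^TW)}{y}=\+U_1^Th$, so $\+V_2^TW$ is recovered by a single quadrature with the decay condition $\+V_2^TW(+\infty)=\+0$; this is \eqref{eq:V2T_n} and needs no boundary data. The third row then reads $\+A_{33}\od{(\+V_3^TW)}{y}=-\+Q_{33}(\+V_3^TW)+\tilde h$, with $\tilde h$ collecting $\+U_3^Th$ and the now-known contributions of $\+V_2^TW$; this is the genuine characteristic system. I would diagonalize it by setting $\+V_3^TW=\+T z$ with $\+T=\+L^{-T}\+R$, using $\+T^T\+Q_{33}\+T=\+I$ and $\+T^T\+A_{33}\+T=\+\Lambda$ from \eqref{eq:LAL}, to reach the scalar equations $\+\Lambda\od{z}{y}=-z+\+T^T\tilde h$. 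Splitting $z=(z_+,z_0,z_-)$ along $\+\Lambda_+,\+0,\+\Lambda_-$ and applying the three elementary observations of Section 2, $z_0$ is algebraically fixed by the source, the decay at $+\infty$ fixes $z_-(0)$ uniquely, and for $z_+$ the decay is automatic under $a<1/\lambda_{max}$, leaving $z_+(0)$ as the only free parameter. Finally the second row, with $\+A_{21}$ invertible, determines $\od{(\+V_1^TW)}{y}$ from the known blocks, and one more quadrature with $\+V_1^TW(+\infty)=\+0$ produces \eqref{eq:V1T_n}.

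It remains to impose the boundary condition. Writing $\+V_3^TW(0)=\+T_+z_+(0)+\+T_0z_0(0)+\+T_-z_-(0)$ with $z_0(0),z_-(0)$ already determined by $h$, condition \eqref{eq:B3bc} becomes $\+B\+T_+z_+(0)=g-\+B\+T_0z_0(0)-\+B\+T_-z_-(0)$, whose right-hand side is known. Since $\+B\+T_+\in\bbR^{n_+\times n_+}$ is square, this has a unique solution $z_+(0)$ for every admissible right-hand side if and only if $\+B\+T_+$ is invertible, i.e. $\mathrm{rank}(\+B\+T_+)=n_+$; as every other block is uniquely determined by $h$, existence and uniqueness of $W$ are equivalent to the unique solvability of $z_+(0)$, giving both directions of the claim, and assembling $z_+(0)$ with the block formulas yields \eqref{eq:V1T_n}, \eqref{eq:V2T_n}, \eqref{eq:V3T_n}. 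The main obstacle is not the final rank argument but the preparatory structural step: establishing $\+A_{13}=\+0$ and the invertibility of $\+A_{12}$ and $\+A_{21}$ so that the three blocks genuinely decouple and $\+V_2^TW,\+V_1^TW$ reduce to pure quadratures. Once that triangular structure is exposed, the characteristic boundary (the $z_0$ and $z_-$ modes carrying no free constant) and the convergence of the quadratures under the exponential weight are the only remaining points to check.
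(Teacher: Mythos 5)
Your proposal is correct and takes essentially the same route as the paper's own proof: the same $\+U,\+V$ block transformation from Lemma \ref{lem:01}, quadrature with the decay condition for $\+V_2^TW$ and $\+V_1^TW$, diagonalization of the $\+V_3$ block via $\+T=\+L^{-T}\+R$ into the characteristic equations for $z_+,z_0,z_-$, and reduction of \eqref{eq:B3bc} to the square system $\+B\+T_+z_+(0)=g-\+B\left(\+T_0z_0(0)+\+T_-z_-(0)\right)$, solvable uniquely iff $\mathrm{rank}(\+B\+T_+)=n_+$. The structural facts you verify explicitly ($\+A_{11}=\+0$ implicitly, $\+A_{13}=\+0$, and invertibility of $\+A_{12}$ and $\+A_{21}=\+U_2^T\+U_2$) are exactly what the paper uses tacitly when asserting that \eqref{eq:V2T_n} determines $\+V_2^TW$ and that \eqref{eq:V1T_n} follows.
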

\begin{proof}
	Using Lemma \ref{lem:01}, the system \eqref{eq:NH1} is 
	equivalent to
  \begin{equation*}
    \+U^T\+A\+V\od{(\+V^T W)}{y}
    = - \+U^T\+Q\+V (\+V^T W) + \+U^Th.
  \end{equation*}
  Since $\+Q_{1j}=\+0$, the first $p$ lines of the system 
  would give
\begin{equation}\label{eq:V2T_n}
\+G^T\+A W(y) = -\int_y^{+\infty}\!\!\+G^T h(s)\,\mathrm{d}s.
\end{equation}
Since $\mathrm{span}\{\+V_2\}=\mathrm{span}\{\+A\+G\}$, the formula
\eqref{eq:V2T_n} can uniquely determine the value of $\+V_2^T W$.

  Since $\mathrm{rank}(\+A_{21})=\mathrm{rank}(\+U_2)=r$, 
  the matrix $\+A_{21}$ is invertible and the next $r$ lines
  of the system give
  \begin{eqnarray}\label{eq:V1T_n}
	  \+V_1^TW(y) &=& -\+A_{21}^{-1}\left(\+A_{22}
	  (\+V_2^TW(y))+\+A_{23}(\+V_3^T W(y))\right)\\
	  && + \int_{y}^{+\infty}\+A_{21}^{-1}\left(
	  \+Q_{22}(\+V_2^T W(s))+ \notag
	  \+Q_{23}(\+V_3^T W(s))-\+U_2^Th(s)\right)
    \, \mathrm{d}s. 
  \end{eqnarray}
  The formula \eqref{eq:V1T_n} uniquely determines 
  $\+V_1^T W$ if $\+V_3^T W$ and $\+V_2^TW$ are known.
  
  The last $N-p-r$ equations are separated alone as 
  \begin{equation*}
    \+A_{33}\odd{y}(\+V_3^T W) = -\+Q_{33}
    (\+V_3^T W)+\+V_3^Th-\+Q_{32}\+V_2^T W
	-\+A_{32}\od{\+V_2^TW}{y},
  \end{equation*}
where $\+V_2^TW$ is given by \eqref{eq:V2T_n}.
Denote by 
\[h_3=\+Q_{33}^{-1}\left(\+V_3^Th-\+Q_{32}\+V_2^T W
	-\+A_{32}\od{\+V_2^TW}{y}\right),\]
then we have 
\[
\+Q_{33}^{-1}\+A_{33}\od{\+V_3^T W}{y} = -\+V_3^T W +h_3,
\]
where $h_3$ is a given vector relying on $h$.

From the eigenvalue decomposition \eqref{eq:T0} of
$\+Q_{33}^{-1}\+A_{33}$, we assume 
$\+T^{-1} h_3=[h_+^T,h_0^T,h_-^T]^T,$ where
the rows of $h_+,h_0$ and $h_-$ are individually
$n_+,n_0$ and $n_-$. We write 	
\begin{equation}\label{eq:V3T_n} 
	\+V_3^T W = \+T_+ z_+ + \+T_{0} z_0 + \+T_- z_-,
\end{equation}
then the characteristic equations give the solution
\begin{gather}\label{eq:z0}
	z_0 = h_0(y),\\ \label{eq:zminus}
	z_- = -\+\Lambda_-^{-1}
	\int_y^{+\infty}\!\!\exp(\+\Lambda_-^{-1}(s-y))
	h_-(s)\,\mathrm{d}s,
\end{gather}
and 
\begin{equation}\label{eq:z}
	z_+ = \exp(-\+\Lambda_+^{-1}y) z_+(0) + 
	\+\Lambda_+^{-1}\int_0^y\!\!\exp(\+\Lambda_+^{-1}
	(s-y)) h_+(s)\,\mathrm{d}s,
\end{equation}
where only $z_+(0)\in\bbR^{n_+}$ should be prescribed 
by the boundary condition.

The boundary condition \eqref{eq:B3bc} leads to 
\begin{equation}\label{eq:lin_n}
	\+B\+T_+ z_+(0) = g-\+B(\+T_{0} z_0(0)
+\+T_{-}z_-(0)).
\end{equation}
According to unique solvability of the linear
algebraic system, we can solve a unique $z_+(0)$
from \eqref{eq:lin_n} if and only if the first 
condition in Theorem \ref{thm:01} holds.

In a word, when the system is given, the variable
$\+V_1^T W$ is determined. Then $\+V_3^T W$ is
uniquely solvable if and only if the first condition
in this lemma holds. Finally, $\+V_2^T W$ can be determined. 
\end{proof}

To estimate the formal solutions \eqref{eq:V1T_n},
\eqref{eq:V2T_n} and \eqref{eq:V3T_n}, we introduce some 
Poincare type inequalities.
For any $h$ with $\|h\|_a<+\infty$, we define 
\begin{equation}\label{eq:rh}
	r(y)=-\int_y^{+\infty}\!\! h(s)\,\mathrm{d}s.
\end{equation}
Then we have $r'(y)=h(y)$ and for any component $r_i$ of $r$, 
we have 
\begin{equation*}
	|r_i(y)|^2 \leq \int_y^{+\infty}\!\! e^{-2as}\,\mathrm{d}s
	\int_y^{+\infty}\!\! e^{2as} h^Th\,\mathrm{d}s < +\infty.
\end{equation*}

\begin{lemma}\label{lem:poin}
	There exists a constant $c_0>0$ such that the Poincare 
	inequality holds
\begin{equation}\label{eq:Poi1}
	\|r\|_a\leq c_0\|r'\|_a=c_0\|h\|_a,
\end{equation}
and the trace inequality holds
\begin{equation}\label{eq:trace}
	\|r(0)\|\leq c_0\|h\|_a.
\end{equation}
\end{lemma}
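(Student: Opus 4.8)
The goal is to establish Lemma~\ref{lem:poin}, namely the Poincar\'e inequality \eqref{eq:Poi1} and the trace inequality \eqref{eq:trace} for the function $r(y)=-\int_y^{+\infty}h(s)\,\mathrm{d}s$ under the weighted norm $\|\cdot\|_a$. The plan is to exploit the pointwise bound already derived just before the lemma statement, which came from Cauchy--Schwarz and gave
\[
|r_i(y)|^2 \leq \int_y^{+\infty}\!\!e^{-2as}\,\mathrm{d}s\int_y^{+\infty}\!\!e^{2as}h^Th\,\mathrm{d}s = \frac{e^{-2ay}}{2a}\int_y^{+\infty}\!\!e^{2as}h^Th(s)\,\mathrm{d}s.
\]
This single inequality is the engine for both claims.

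For the trace inequality \eqref{eq:trace}, I would simply set $y=0$ in the pointwise estimate above and sum over the components $i$. This yields $\|r(0)\|^2=\sum_i|r_i(0)|^2 \leq \frac{1}{2a}\int_0^{+\infty}e^{2as}h^Th(s)\,\mathrm{d}s = \frac{1}{2a}\|h\|_a^2$, so \eqref{eq:trace} holds with any $c_0\geq 1/\sqrt{2a}$. This step is immediate and requires no further machinery.

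For the Poincar\'e inequality \eqref{eq:Poi1}, the plan is to multiply the pointwise bound by the weight $e^{2ay}$ and integrate in $y$ over $[0,+\infty)$. Summing over components gives
\[
\|r\|_a^2 = \int_0^{+\infty}\!\!e^{2ay}r^Tr\,\mathrm{d}y \leq \frac{1}{2a}\int_0^{+\infty}\!\!\int_y^{+\infty}\!\!e^{2as}h^Th(s)\,\mathrm{d}s\,\mathrm{d}y.
\]
The key step is then to exchange the order of integration (Fubini--Tonelli, justified since the integrand is nonnegative). The region $\{0\leq y\leq s<+\infty\}$ gives an inner $y$-integral over $[0,s]$ of length $s$, producing $\int_0^{+\infty}s\,e^{2as}h^Th(s)\,\mathrm{d}s$. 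To recover $\|h\|_a^2$ one needs to absorb the linear factor $s$ into the exponential: since $s\,e^{2as}\leq C_\epsilon e^{2(a+\epsilon)s}$ in general, a direct absorption would require a slightly larger weight. The cleanest route is to use a weight strictly below the admissible threshold—the hypothesis $e^{ay}h\in L^2$ holds for some $a>0$, and one works with a smaller exponent $a'<a$ so that $s\,e^{2a'y}\leq C e^{2ay}$ uniformly, giving $\|r\|_{a'}\leq c_0\|h\|_a$; alternatively, one observes that the factor $s$ appearing after the swap can be controlled by splitting $e^{2as}=e^{a s}\cdot e^{a s}$ and using that $s\,e^{-\delta s}$ is bounded.

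The main obstacle is precisely this linear factor $s$ that emerges from the double integral after applying Fubini. A naive bound loses the sharp constant and seemingly requires a strictly larger exponential weight, so the delicate point is to arrange the argument—either by working at a slightly reduced exponent or by a more careful weighted Cauchy--Schwarz that front-loads an extra decaying factor—so that the resulting constant $c_0$ depends only on $a$ and the clean inequality $\|r\|_a\leq c_0\|h\|_a$ is genuinely recovered with the same weight on both sides. I expect the intended proof to insert a harmless decay margin, using the assumption that $h$ decays with rate $a$ strictly faster than the weight used in the norm, which dissolves the spurious $s$ factor.
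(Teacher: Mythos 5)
Your proof of the trace inequality \eqref{eq:trace} is correct: setting $y=0$ in the pointwise Cauchy--Schwarz bound and summing over components gives $\|r(0)\|\leq c_0\|h\|_a$ at once. The Poincar\'e inequality \eqref{eq:Poi1}, however, is left with a genuine gap, and you identify it yourself: after multiplying by $e^{2ay}$ and swapping the order of integration you face
\[
\|r\|_a^2 \leq \frac{1}{2a}\int_0^{+\infty}\!\! s\,e^{2as}\,h^Th(s)\,\mathrm{d}s,
\]
and the factor $s$ cannot be absorbed at the same weight. Of your two proposed repairs, the first (working at a reduced exponent $a'<a$) proves $\|r\|_{a'}\lesssim\|h\|_a$, which is strictly weaker than the lemma; the lemma is applied in Lemma \ref{lem:42} with the \emph{same} weight $a$ on both sides (e.g.\ $\|\+V_2^T W\|_a\lesssim\|h\|_a$), so the weakened version is not a drop-in replacement and would force a rewrite of Theorem \ref{thm:01} with two weights. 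The second repair is the right instinct but is not carried out, and its stated justification is wrong: there is no assumption that ``$h$ decays strictly faster than the weight used in the norm'' --- Theorem \ref{thm:01} quantifies over all $h$ with $\|h\|_a<+\infty$, with exactly the weight $a$ appearing in \eqref{eq:est}. So, as written, the proposal does not prove \eqref{eq:Poi1}.

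Both the fix and the paper's actual argument are worth recording. Your second repair can be completed with no extra hypothesis: in the Cauchy--Schwarz step split $h_i(s)=e^{-\epsilon s}\cdot e^{\epsilon s}h_i(s)$ with $0<\epsilon<a$ rather than $\epsilon=a$, giving $|r_i(y)|^2\leq\frac{e^{-2\epsilon y}}{2\epsilon}\int_y^{+\infty}e^{2\epsilon s}h_i^2\,\mathrm{d}s$; after multiplying by $e^{2ay}$ and applying Fubini, the inner integral becomes $\int_0^s e^{2(a-\epsilon)y}\,\mathrm{d}y\leq\frac{e^{2(a-\epsilon)s}}{2(a-\epsilon)}$, the exponentials recombine to $e^{2as}$, and one gets $\|r\|_a^2\leq\frac{1}{4\epsilon(a-\epsilon)}\|h\|_a^2$ (with $\epsilon=a/2$ this gives $c_0=1/a$). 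The paper instead avoids pointwise bounds and Fubini altogether: since $e^{2as}r^Tr(s)\to 0$ as $s\to+\infty$, the fundamental theorem of calculus gives the identity
\[
e^{2ay}r^Tr(y)=-2a\int_y^{+\infty}\!\!e^{2as}\,r^Tr(s)\,\mathrm{d}s-2\int_y^{+\infty}\!\!e^{2as}\,(r')^Tr(s)\,\mathrm{d}s,
\]
and setting $y=0$ and applying the weighted Cauchy--Schwarz inequality to the last term yields
\[
r^T(0)r(0)+2a\|r\|_a^2\leq 2\|r\|_a\|h\|_a,
\]
from which \eqref{eq:Poi1} follows by dropping $r^T(0)r(0)\geq 0$, and \eqref{eq:trace} follows by dropping $2a\|r\|_a^2\geq 0$ and then inserting \eqref{eq:Poi1}. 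This energy-identity route produces both inequalities from a single line and never generates the spurious linear factor.
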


\begin{proof}
	By definition, we have
	\begin{eqnarray*}
		e^{2ay} r^T(y) r(y) &=& 
		-\int_{y}^{+\infty}\!\!
		\partial_s(e^{2as} r^T r(s))\,\mathrm{d}s \\
		&=& 
		-2a\int_{y}^{+\infty}\!\!
		e^{2as} r^T r(s)\,\mathrm{d}s 
		-2\int_{y}^{+\infty}\!\!e^{2as}
		(r')^T r(s)\,\mathrm{d}s.
	\end{eqnarray*}
	Let $y=0$ and use the Cauchy-Schwarz inequality, then we have
	\begin{eqnarray*} 
		r^T(0)r(0) + 2a\|r\|_a^2 \leq 2\|r\|_a\|h\|_a.
	\end{eqnarray*}
	Since $r^T(0)r(0)\geq 0,$ we have
	\[\|r\|_a\leq \frac{1}{a}\|h\|_a.\]
	Analogously, we can ignore the term $\|r\|_a^2$ to get 
	\[\sqrt{r^T(0) r(0)}\leq \sqrt{\frac{2}{a}}\|h\|_a.\]
	This completes the proof.
\end{proof}

For convenience, we use $a\lesssim b$ to represent that
there exists a constant $c>0$ such that $a\leq cb.$
Because all the matrices considered here are finite-dimensional,
with constant coefficients, they should have a uniform
upper bound on matrix norms.

\begin{lemma}
	\label{lem:42}
	Under the assumption of Theorem \ref{thm:01}, we suppose
	the first condition in Theorem \ref{thm:01} holds. Then 
	the estimation \eqref{eq:est} holds if and
	only if the second condition in Theorem \ref{thm:01} holds,
	i.e., $\+B\+T_0=\+0.$
\end{lemma}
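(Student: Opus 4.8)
The plan is to exploit the explicit solution from Lemma \ref{lem:41}: since $\+V=[\+V_1,\+V_2,\+V_3]$ is orthogonal, I have $\|W\|_a^2=\|\+V_1^TW\|_a^2+\|\+V_2^TW\|_a^2+\|\+V_3^TW\|_a^2$, so it suffices to estimate each block given by \eqref{eq:V1T_n}, \eqref{eq:V2T_n} and \eqref{eq:V3T_n}. First I would record that $\+V_2^TW$ and its trace $\+V_2^TW(0)$ are controlled by $\|h\|_a$ by applying the Poincaré and trace inequalities of Lemma \ref{lem:poin} to \eqref{eq:V2T_n}, and that \eqref{eq:V1T_n} then bounds $\|\+V_1^TW\|_a$ by $\|\+V_2^TW\|_a+\|\+V_3^TW\|_a+\|h\|_a$ (pointwise terms contribute their own norms, integral terms are handled by Poincaré). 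Thus the whole estimate reduces to controlling $\+V_3^TW=\+T_+z_++\+T_0z_0+\+T_-z_-$, and, because $\+T$ is invertible, to controlling $z_+,z_0,z_-$ separately. The crucial observation, already flagged in the Remark, is that $z_0=h_0(y)$ so $z_0(0)=h_0(0)$ is a \emph{pointwise} trace of $h$, the only quantity not dominated by $\|h\|_a$.

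For the sufficiency direction ($\+B\+T_0=\+0\Rightarrow\eqref{eq:est}$) I would estimate the three characteristic modes. The term $z_0=h_0$ satisfies $\|z_0\|_a\lesssim\|h\|_a$ directly. For $z_-$ in \eqref{eq:zminus} the kernel $\exp(\+\Lambda_-^{-1}(s-y))$ decays on $s\geq y$ since $\+\Lambda_-<\+0$, so after inserting the weight a Young/Schur estimate gives $\|z_-\|_a\lesssim\|h\|_a$ and, by Cauchy--Schwarz, the trace $\|z_-(0)\|\lesssim\|h\|_a$. For $z_+$ in \eqref{eq:z} the convolution part is bounded the same way, its weighted kernel being integrable under the threshold $a<1/\lambda_{max}$, while the homogeneous mode $\exp(-\+\Lambda_+^{-1}y)z_+(0)$ lands in the weighted space precisely because its norm-squared integrand behaves like $e^{2(a-1/\lambda_{max})y}$, integrable exactly under that same hypothesis; this yields $\|z_+\|_a\lesssim\|z_+(0)\|+\|h\|_a$. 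Finally, since $\mathrm{rank}(\+B\+T_+)=n_+$ makes $\+B\+T_+$ invertible, the boundary relation \eqref{eq:lin_n} gives $z_+(0)=(\+B\+T_+)^{-1}(g-\+B\+T_0z_0(0)-\+B\+T_-z_-(0))$; with $\+B\+T_0=\+0$ the offending term $\+B\+T_0h_0(0)$ drops out, so $\|z_+(0)\|\lesssim\|g\|+\|h\|_a$, and assembling the blocks delivers \eqref{eq:est}.

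For the necessity direction I would argue by contrapositive: assuming $\+B\+T_0\neq\+0$, I would construct a family $\{h^{(k)}\}$ violating \eqref{eq:est} with $g=0$. Taking $h^{(k)}$ with $\+G^Th^{(k)}\equiv\+0$ forces the right-hand side of \eqref{eq:V2T_n} to vanish, hence $\+V_2^TW\equiv\+0$ and $h_3=\+Q_{33}^{-1}\+V_3^Th$; since $[\+G,\+V_3]$ has full column rank by \eqref{eq:v3g}, I can simultaneously impose $\+G^Th=\+0$ and prescribe $\+V_3^Th$ freely. I would then concentrate $h^{(k)}$ near $y=0$ (a bump of height $k$ and width $k^{-2}$) and prescribe $\+V_3^Th$ so that $\|h^{(k)}\|_a$ stays uniformly bounded while $\|h_0^{(k)}(0)\|\to\infty$ in a direction with $\+B\+T_0h_0^{(k)}(0)\neq\+0$. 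The boundary relation then forces $\|z_+(0)\|\to\infty$; because the $z_+(0)$-dependent part $\+T_+\exp(-\+\Lambda_+^{-1}y)z_+(0)$ of $\+V_3^TW$ is linearly independent of the $z_0$- and $z_-$-contributions and has weighted norm bounded below by a positive multiple of $\|z_+(0)\|$, it cannot be cancelled, so $\|W\|_a\to\infty$ while $\|h^{(k)}\|_a+\|g\|$ stays bounded, contradicting \eqref{eq:est}.

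The main obstacle is twofold. On the analytic side it is the collection of weighted convolution and trace estimates, where the exponent $a<1/\lambda_{max}$ must be shown to be exactly the threshold making the unstable homogeneous mode $\exp(-\+\Lambda_+^{-1}y)z_+(0)$ square-integrable against the weight $e^{2ay}$. On the algebraic and constructive side it is the necessity counterexample: I must verify that the pointwise trace $h_0(0)$ can be inflated independently of $\|h\|_a$ under the constraint $\+G^Th=\+0$, and that the resulting growth of $z_+(0)$ genuinely propagates to $\|W\|_a$ without cancellation against the other characteristic modes.
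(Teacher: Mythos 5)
Your proposal is correct, and the sufficiency half follows the paper's proof essentially step for step: Poincar\'e/trace control of $\+V_2^TW$ from \eqref{eq:V2T_n}, reduction of $\+V_3^TW$ to the modes $z_0,z_-,z_+$, boundedness of the weighted norm of $\exp(-\+\Lambda_+^{-1}y)$ under $a<1/\lambda_{max}$, and the solve \eqref{eq:z00} in which $\+B\+T_0=\+0$ removes the uncontrollable term $\+B\+T_0z_0(0)$.

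The necessity half is where you genuinely depart from the paper, and your variant is the more careful one. The paper takes $h=\+V_3 c$ with $c=\+Q_{33}\+T(0,c_0^T,0)^T$ and asserts $z_0=h_0=c_0$; but this tacitly assumes $\+G^Th=\+0$, which is not automatic, since only $\+G^T\+A\+V_3=\+0$ and $\+V_1^T\+V_3=\+0$ are guaranteed while $\+G^T\+V_3$ itself need not vanish. If $\+G^T\+V_3\neq\+0$, the paper's $h$ excites $\+V_2^TW$ through \eqref{eq:V2T_n}, and $h_3$ (hence $h_0$) picks up correction terms proportional to $c_0$ that the paper ignores. Your construction imposes $\+G^Th\equiv\+0$ explicitly --- legitimate because \eqref{eq:v3g} makes $[\+G,\+V_3]$ of full column rank, so $\+G^Th$ and $\+V_3^Th$ can be prescribed independently --- which forces $\+V_2^TW\equiv\+0$ and $h_3=\+Q_{33}^{-1}\+V_3^Th$ exactly, so $h_0$ is prescribed without contamination. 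You also supply the final step the paper leaves implicit: that unbounded $\|z_+(0)\|$ truly forces unbounded $\|W\|_a$. This does need an argument, and yours works: $\|W\|_a\geq\|\+V_3^TW\|_a\geq\|\+T_+\exp(-\+\Lambda_+^{-1}y)z_+(0)\|_a-C\|h\|_a$, and since $z_+(0)\mapsto\|\+T_+\exp(-\+\Lambda_+^{-1}\,\cdot\,)z_+(0)\|_a$ is a norm on the finite-dimensional space $\bbR^{n_+}$ (injectivity from $\+T_+$ having full column rank), it dominates a positive multiple of $\|z_+(0)\|$. So both proofs implement the same idea --- inflate the pointwise trace $h_0(0)$ while keeping $\|h\|_a$ bounded --- but your version closes two small gaps in the paper's write-up.
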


\begin{proof}
	Utilizing the boundedness of the matrices and Poincare 
	inequality, from \eqref{eq:V2T_n}, we have
	\[
		\|\+V_2^T W\|_a \lesssim \|\+G^T\+AW\|_a
		\lesssim \|\+G^Th\|_a \lesssim \|h\|_a.	
	\]
	Analogously, the formula \eqref{eq:V1T_n} gives
	\[
		\|\+V_1^T W\|_a \lesssim \|\+V_2^T W\|_a+\|\+V_3^T W\|_a
		+\|h\|_a \lesssim \|h\|_a+\|\+V_3^T W\|_a.
	\]
	The formula \eqref{eq:V3T_n} gives 
	\[
		\|\+V_3^T W\|_a \lesssim \|z_0\|_a + \|z_-\|_a
		+ \|z_+\|_a,
	\]
	where $z_0$ is given by \eqref{eq:z0}, 
	\[
		\|z_0\|_a \lesssim \|h_3\|_a \lesssim \|h\|_a,	
	\]
	and from \eqref{eq:zminus}, we have
	\[
		\|z_-\|_a \lesssim \|h_3\|_a \lesssim \|h\|_a.	
	\]
	Due to the condition $a<1/\lambda_{max}$, the term 
	$\|\exp\left(-\+\Lambda_+^{-1}y\right)\|_a$ is bounded and
	we have
	\[ 
	\lim_{y\rightarrow +\infty}
	\+\Lambda_+^{-1}\int_0^y\!\!\exp(\+\Lambda_+^{-1}
	(s-y)) h_+(s)\,\mathrm{d}s = 0.
	\]
	So from \eqref{eq:z}, we have
	\[
		\|z_+\|_a \lesssim \|z_+(0)\|
		\|\exp\left(-\+\Lambda_+^{-1}y\right)\|_a + \|h\|_a
		\lesssim \|z_+(0)\|+\|h\|_a.
	\]

	Now the key point lies in the estimation of $\|z_+(0)\|$.
	We can inverse $\+B\+T_+$ and solve $z_+(0)$ as
	\begin{equation}
		\label{eq:z00}
		z_+(0)=(\+B\+T_+)^{-1}\left(g-
	\+B\+T_0z_0(0)-\+B\+T_-z_-(0)\right).
	\end{equation}	
	If the second condition in Theorem \ref{thm:01} holds,
	then we have
	\[z_+(0)=(\+B\+T_+)^{-1}\left(g
	-\+B\+T_-z_-(0)\right),\]
	and 
	\begin{equation*}
	\|z_+(0)\|\lesssim \|g\|+\|z_-(0)\|.
	\end{equation*}
	Due to the trace inequality, we can see from \eqref{eq:zminus}
	that $\|z_-(0)\|\lesssim \|h\|_a$. Thus, we finally get 
	the estimation \eqref{eq:est}.

	On the contrary, if $\+B\+T_0\neq \+0,$ we consider
	$\|z_0(0)\|$. By definition, we choose $h=\+V_3 c$ with 
	$c=c(y)\in\bbR^{n_++n_0+n_-}$ such that 
	\[c=\+Q_{33}\+T\begin{bmatrix} 0 \\ c_0 \\ 0 \end{bmatrix},
		\quad c_0\in\bbR^{n_0},\]
	then we have $z_0=h_0=c_0.$ So $\|z_0(0)\|$ can not be 
	controlled by $\|h\|_a$, i.e., for any $M_0>0$, there 
	exists $c_0$ such that 
	$\|h\|_a=\|\+V_3 c\|_a=1$ but
	$\|z_0(0)\|=\|c_0(0)\|>M_0.$ From \eqref{eq:z00}, since
	$g$ and $z_-(0)$ are bounded, we can always find $c_0$
	such that $\|h\|_a=\|\+V_3 c\|_a=1$ but
	$\|z_+(0)\|>M_0.$ So the estimation \eqref{eq:est} does
	not hold when $\+B\+T_0\neq \+0.$
	
	In a word, we complete the proof.
\end{proof}

Theorem \ref{thm:01} is the combination of Lemma \ref{lem:41}
and Lemma \ref{lem:42}. From the proof of Lemma \ref{lem:42},
we can see that $\+V_3^TW(0)$ generally can not be controlled
by $\|h\|_a$ due to the contribution of $z_0(0)$.
So is $\+V_1^TW(0)$ from the formula \eqref{eq:V1T_n}. But
from \eqref{eq:V1T_n}, we have
\[\|\+A_{21}\+V_1^TW(0)+\+A_{23}\+V_3^TW(0)\|\lesssim
\|h\|_a\]
because of the Poincare inequality.

\section{Conclusions}
We extended the well-posed conditions of the linear moment
system in half-space to the non-homogeneous case. The 
stability of the non-homogeneous equations would ask for 
additional criteria compared to the homogeneous case. The 
Grad boundary condition was shown unstable and thus unreliable
in reality. Some modified boundary conditions were proved 
well-posed, while the accuracy of these conditions
should be studied further.
Thanks to the analytical expressions of solutions, we can 
solve the non-homogeneous half-space problem efficiently. 

\subsubsection*{}
\begin{Ack}
This work is financially supported by the National Key R\&D
Program of China, Project Number 2020YFA0712000. 
\end{Ack}
\begin{datava}
This theoretical work uses no external dataset.
\end{datava}

\begin{appendix}
	\bibliography{../ATC}

\begin{thebibliography}{10}

\bibitem{1997Arnold}
A.~Arnold and U.~Giering.
\newblock An analysis of the {M}arshak conditions for matching {B}oltzmann and
  {E}uler equations.
\newblock {\em Math. Models Methods Appl. Sci.}, 07(04):557--577, 1997.

\bibitem{Bardos2006}
C.~Bardos, F.~Golse, and Y.~Sone.
\newblock Half-space problems for the {B}oltzmann equation: A survey.
\newblock {\em J. Stat. Phys.}, 124:275--300, 2006.

\bibitem{Bern2008}
N.~Bernhoff.
\newblock On half-space problems for the linearized discrete {B}oltzmann
  equation.
\newblock {\em Riv. Mat. Univ. Parma}, 9:73--124, 2008.

\bibitem{Bern2010}
N.~Bernhoff.
\newblock On half-space problems for the discrete {B}oltzmann equation.
\newblock {\em Il Nuovo Cimento C}, 33(1):47--54, 2010.

\bibitem{Bern2010b}
N.~Bernhoff.
\newblock On half-space problems for the weakly non-linear discrete {B}oltzmann
  equation.
\newblock {\em Kinet. Relat. Models}, 3:195 -- 222, 2010.

\bibitem{Fan_new}
Z.~Cai, Y.~Fan, and R.~Li.
\newblock Globally hyperbolic regularization of {G}rad's moment system.
\newblock {\em Comm. Pure Appl. Math.}, 67(3):464--518, 2014.

\bibitem{framework}
Z.~Cai, Y.~Fan, and R.~Li.
\newblock A framework on moment model reduction for kinetic equation.
\newblock {\em SIAM J. Appl. Math.}, 75(5):2001--2023, 2015.

\bibitem{Cai2011}
Z.~Cai, R.~Li, and Z.~Qiao.
\newblock N{R}$xx$ simulation of microflows with {S}hakhov model.
\newblock {\em SIAM J. Sci. Comput.}, 34(1):339--369, 2011.

\bibitem{Cai2020}
Z.~Cai and Y.~Wang.
\newblock Regularized 13-moment equations for inverse power law models.
\newblock {\em J. Fluid Mech.}, 894:A12, 2020.

\bibitem{CC1989}
C.~Cercignani.
\newblock {\em The Boltzmann Equation and Its Applications}.
\newblock Springer-Verlag, 1989.

\bibitem{Chen2019}
H.~Chen, Q.~Li, and J.~Lu.
\newblock A numerical method for coupling the {BGK} model and {E}uler equations
  through the linearized {K}nudsen layer.
\newblock {\em J. Comput. Phys.}, 398:108893, 2019.

\bibitem{Lijun2017}
Y.~Fan, J.~Li, R.~Li, and Z.~Qiao.
\newblock Resolving {K}nudsen layer by high order moment expansion.
\newblock {\em Contin. Mech. Thermodyn.}, 31(5):1313--1337, 2019.

\bibitem{Grad1949N}
H.~Grad.
\newblock Note on {N}-dimensional {H}ermite polynomials.
\newblock {\em Comm. Pure Appl. Math.}, 2(4):325--330, 1949.

\bibitem{Grad1949}
H.~Grad.
\newblock On the kinetic theory of rarefied gases.
\newblock {\em Comm. Pure Appl. Math.}, 2(4):331--407, 1949.

\bibitem{Gross1957}
E.~Gross and S.~Ziering.
\newblock Kinetic theory of linear shear flow.
\newblock {\em Phys. Fluids}, 1(3):215--224, 1958.

\bibitem{Gu2014}
X.~Gu and D.~Emerson.
\newblock Linearized-moment analysis of the temperature jump and temperature
  defect in the {K}nudsen layer of a rarefied gas.
\newblock {\em Phys. Rev. E}, 89(6):063020, 2014.

\bibitem{Hil2013}
D.~Hilditch.
\newblock An introduction to well-posedness and free-evolution.
\newblock {\em Int. J. Mod. Phys. A}, 28(22n23):1340015, 2013.

\bibitem{Hu2019}
Z.~Hu and G.~Hu.
\newblock An efficient steady-state solver for microflows with high-order
  moment model.
\newblock {\em J. Comput. Phys.}, 392:462--482, 2019.

\bibitem{Kreiss1970}
H.~Kreiss.
\newblock Initial boundary value problems for hyperbolic systems.
\newblock {\em Comm. Pure Appl. Math.}, 23(3):277--298, 1970.

\bibitem{1960Local}
P.~Lax and R.~Phillips.
\newblock Local boundary conditions for dissipative symmetric linear
  differential operators.
\newblock {\em Comm. Pure Appl. Math.}, 13(3):427--455, 1960.

\bibitem{Lu2017}
Q.~Li, J.~Lu, and W.~Sun.
\newblock A convergent method for linear half-space kinetic equations.
\newblock {\em ESAIM: Math. Model. Numer. Anal.}, 51(5):1583--1615, 2017.

\bibitem{Zheng2021}
R.~Li, W.~Li, and L.~Zheng.
\newblock Direct flux gradient approximation to moment closure of kinetic
  equations.
\newblock {\em SIAM J. Appl. Math.}, 81(5):2153--2179, 2021.

\bibitem{Yang2022a}
R.~Li and Y.~Yang.
\newblock Linear moment models to approximate {K}nudsen layers.
\newblock arXiv:2204.04941, 2022.

\bibitem{Loyalka1967}
S.~Loyalka and J.~Ferziger.
\newblock Model dependence of the slip coefficient.
\newblock {\em Phys. Fluids}, 10(8):1833--1839, 1967.

\bibitem{Loyalka1968}
S.~Loyalka and J.~Ferziger.
\newblock Model dependence of the temperature slip coefficient.
\newblock {\em Phys. Fluids}, 11(8):1668--1671, 1968.

\bibitem{Osher1975}
A.~Majda and S.~Osher.
\newblock Initial-boundary value problems for hyperbolic equations with
  uniformly characteristic boundary.
\newblock {\em Comm. Pure Appl. Math.}, 28(5):607--675, 1975.

\bibitem{Rauch1985}
J.~Rauch.
\newblock Symmetric positive systems with boundary characteristic of constant
  multiplicity.
\newblock {\em Trans. Amer. Math. Soc.}, 291(1):167--187, 1985.

\bibitem{Sarna2018}
N.~Sarna and M.~Torrilhon.
\newblock On stable wall boundary conditions for the {H}ermite discretization
  of the linearised {B}oltzmann equation.
\newblock {\em J. Stat. Phys.}, 170:101--126, 2018.

\bibitem{Sone2007}
Y.~Sone.
\newblock {\em Molecular gas dynamics: Theory, techniques, and applications}.
\newblock Birkhäuser, Boston, 2007.

\bibitem{2008Linear}
H.~Struchtrup.
\newblock Linear kinetic heat transfer: Moment equations, boundary conditions,
  and {K}nudsen layers.
\newblock {\em Physica A}, 387(8-9):1750--1766, 2008.

\bibitem{Torrilhon2009}
M.~Torrilhon.
\newblock Special issues on moment methods in kinetic gas theory.
\newblock {\em Contin. Mech. Thermodyn.}, 21(5):341--343, 2009.

\end{thebibliography}
\end{appendix}

\end{CJK*}
\end{document}